\documentclass[11pt,reqno]{amsart}
\usepackage{amsmath,amsthm,amsfonts,amssymb,amscd,overpic}
\usepackage{graphicx,overpic}
\numberwithin{equation}{section}

\newcommand{\eqdef}{\stackrel{\scriptscriptstyle\rm def}{=}}

\def\ZZ {{\mathbb Z}}\def\NN {{\mathbb N}}\def\RR {{\mathbb R}}\def\CC {{\mathbf C}}

\def\Si{\Sigma}\def\La{\Lambda}\def\Ga{\Gamma}

\def\de{\delta}

   \def\cM{{\mathcal M}}              \def\cE{{\mathcal E}}

\DeclareMathOperator{\card}{card}

\def\diff1{\mbox{{\rm Diff}}^1(M)}
\def\diff2{\mbox{{\rm Diff}}^2(M)}

\def\stam{{\mu^\star}}
\def\stam1{{\mu^\star_1}}

\def\st{s}
\def\sst{{ss}}

\def\ut{u}
\def\uut{{uu}}

\def\loc{{\operatorname{loc}}}

\newtheorem{theo}{Theorem}

\newtheorem{theor}{Theorem}[section]
\newtheorem{lemm}[theor]{Lemma}
\newtheorem{step}[theor]{Step}
\newtheorem{claim}[theor]{Claim}
\newtheorem{coro}[theor]{Corollary}

\newtheorem{defi}[theor]{Definition}
\newtheorem{prop}[theor]{Proposition}

\newtheorem{rema}[theor]{Remark}

\addtocounter{figure}{0}

\title[Rich phase transitions]{Rich phase transitions in step skew-products}

\author[L.~J.~D\'iaz]{Lorenzo J. D\'\i az}
\address{Departamento de Matem\'atica PUC-Rio, Marqu\^es de S\~ao Vicente 225, G\'avea, Rio de Janeiro 225453-900, Brazil}
\email{lodiaz@mat.puc-rio.br}

\author[K.~Gelfert]{Katrin~Gelfert}
\address{Instituto de Matem\'atica UFRJ, Av. Athos da Silveira Ramos 149, Cidade Universit\'aria - Ilha do Fund\~ao, Rio de Janeiro 21945-909,  Brazil}\email{gelfert@im.ufrj.br}

\author[M.~Rams]{{Micha\l} Rams} \address{Institute of Mathematics, Polish Academy of Sciences, ul. \'{S}niadeckich 8, 00-956 Warszawa, Poland}
\email{m.rams@impan.gov.pl}

\thanks{This paper was partially supported by CNPq, Faperj, and Pronex (Brazil),  CODY (EU) as well as the MNiSWN grant N201 607640 (Poland). The authors thank the hospitality of IM\,PAN and IM\,UFRJ. The authors thank S.~Crovisier for helpful discussions.}
\keywords{phase transitions, Lyapunov exponents, thermodynamic formalism, partially hyperbolic dynamics}
\subjclass[2010]{Primary: %
37D35, 
37D25, 
37E05, 
37D30, 
37C29
}

\begin{document}
\maketitle

\begin{abstract}
	We present examples of partially hyperbolic and topologically transitive local diffeomorphisms defined as skew products over a horseshoe which exhibit rich phase transitions for the topological pressure. This phase transition follows from a gap in the spectrum of the central Lyapunov exponents. It is associated to the coexistence of two equilibrium states with positive entropy.
	The diffeomorphisms mix hyperbolic behavior of different types. However, in some sense the expanding behavior is not dominating which is indicated by the existence of a measure of maximal entropy with nonpositive central exponent.
\end{abstract}

\section{Introduction}

Given a compact metric space $\Lambda$ and a continuous map $f\colon \Lambda\to\Lambda$, the arising dynamical system can be studied from various points of view. On the one hand, one can investigate the topological dynamics determined by $f$. On the other hand, one can investigate the set $\cM$ of $f$-invariant Borel probability measures and study measure theoretic aspects of the dynamics. Both sides are closely linked with each other. This link is characterized by the thermodynamic formalism and specified by the topological pressure functional (see~\cite{Wal:81} for full details). Given a continuous function $\varphi\colon\Lambda\to\RR$, its topological pressure $P(\varphi)$ is defined in purely topological terms. It can be expressed in measure-theoretic terms via the variational principle
\begin{equation}\label{e.introvp}
	P(\varphi)=\sup_{\mu\in\cM}\Big( h_\mu(f)+\int\varphi\,d\mu\Big),
\end{equation}
where $h_\mu(f)$ denotes the entropy of the measure $\mu$.

The set $\cM$ equipped with the weak topology forms a compact convex space that can have an extremely complicated structure. It is natural to aim for a closer understanding of this structure. One way is to characterize measures that are ``relevant'' and ``designated" in a certain sense,
for example, to focus on equilibrium measures. An invariant measure $\nu$ is said to be an \emph{equilibrium measure} or \emph{equilibrium state} of $\varphi$ with respect to $f$ if it attains the supremum in the variational principle~\eqref{e.introvp}
(the measure $\nu$ maximizes what is sometimes also called the \emph{free energy} of the \emph{potential} $\varphi$).
Note that an equilibrium state $\nu$ for the zero potential $\varphi=0$ is a measure of maximal entropy. 

To show existence and uniqueness and to establish further specific properties of equilibrium measures are among the main problems in the  thermodynamic formalism. On the other hand, particularly interesting are examples where existence or uniqueness fails. In many cases the coexistence of equilibrium states for some given potential is closely related to so-called phase transitions.
Following nowadays standard notation, we say that the pressure function $t\mapsto P(t\varphi)$, $t\in\RR$, exhibits a \emph{phase transition} at a characteristic parameter $t_c$ if it fails to be real analytic at $t_c$. We say that it has a \emph{first order phase transition} at $t_c$ if it fails to be differentiable at $t_c$.

One major line of research in the thermodynamic formalism considers abstract dynamical systems such as Markov shifts and potentials with a certain regularity. See the classical texts by Ruelle~\cite{Rue:04} and Bowen~\cite{Bow:08} as well as the collection by Sarig~\cite{Sar:09}.
However, in the present paper we focus on \emph{smooth} dynamical systems.

We will follow a classical approach to analyze smooth systems by studying their ``basic pieces''. In the realm of uniformly hyperbolic dynamics such pieces are formed by the \emph{basic sets} (sets that are compact, invariant, uniformly hyperbolic, topologically transitive, and locally maximal). Beyond uniformly hyperbolic dynamics a natural line of generalization is the investigation of \emph{homoclinic classes} (such sets are topologically transitive and contain a dense subset of hyperbolic periodic points, see Definition~\ref{def:homcla}).

Loosely speaking, when a system dynamically splits into basic pieces then this should also be reflected by the structure of its ``dual" $\cM$.
Certainly, if a system has several transitive components (though they could be intermingled) then this will be reflected dynamically.
Dobbs~\cite{Dob:09}, for example, explains the mechanism that gives rise to phase transitions related to non-transitive behavior in renormalizable unimodal maps.
If, however, a dynamical system is topologically transitive but there exist pieces that are ``exposed'' in a sense that dynamically and topologically they form extreme points  then we still can observe the phenomenon of phase transitions and coexistence of equilibrium states. To further support this point of view we will  discuss some examples and explain what we mean by ``exposed".

One well-understood case is when the transitive dynamics ``splits'' into a hyperbolic piece and a nonhyperbolic piece. Let us recall those examples of interval maps $f$ and the classically considered potential $\varphi(x)=-\log\,\lvert f'(x)\rvert$. In the example of Manneville and Pomeau~\cite{ManPom:80} the interval $\Lambda=[0,1]$ splits into the single parabolic fixed point $x=0$ and the remaining set $(0,1]$. The pressure $P(t\varphi)$ exhibits a phase transition at $t_c=1$ that is related to a coexistence of the Dirac measure $\delta_0$ and an acip (absolutely continuous invariant probability measure) as equilibrium states.
In the case of the Chebyshev polynomial $x\mapsto 4x(1-x)$ the interval $\Lambda=[0,1]$ is transitive and exposes the post-critical fixed point $x=0$ (though the potential fails to be continuous due to the singularity) and exhibits a phase transition at $t_c=-1$.
An example of Bruin~\cite{Bru:03} discusses quadratic maps with (several) ergodic measures supported on minimal Cantor sets with zero Lyapunov exponents that are coexisting with an acip.
Note that in all these examples the nonhyperbolic part has zero entropy.
See~\cite[Section 7]{BruTod:09},~\cite{IomTod:}, and~\cite{CorRiv:10} for further discussion.

Of similar spirit are the examples of the Julia set $\Lambda=J$ of a polynomial or rational exceptional map $f$ on the complex plane discussed by Makarov and Smirnov in~\cite{MakSmi:96,MakSmi:00}. Here, the Julia set possesses periodic points that are ``dynamically exposed'' in the sense that they are immediately post-critical (there is no branch of preimages dense in $J$ and disjoint with critical points, see also~\cite{GelPrzRamRiv:}). Chebyshev polynomials of degree $d\ge2$ are particular examples. In each of these cases the post-critical set is finite and thus carries only measures with zero entropy.
Those measures are equilibrium states  associated to a phase transition of the pressure function $t\mapsto P(-t\varphi)$ at a characteristic parameter. 

In the present paper we present examples of local diffeomorphisms $F$ with a locally maximal nonhyperbolic set  $\Lambda\subset\RR^3$ that is at the same time a homoclinic class.
We do not aim for generality but instead try to provide the simplest example possible. For that we choose a map that is a skew-product of interval diffeomorphisms over a Smale horseshoe with three legs.
Although the dynamics on $\Lambda$ is topologically transitive, the spectrum of the Lyapunov exponents associated to the one-dimensional central direction $E^c$ contains positive and negative values and has a gap.
In this example the potential $t\varphi(x)=-t\log\,\lVert dF|_{E_c}\rVert$ is continuous and equilibrium states for $t\varphi$ exist for every $t\in\RR$ (see~\cite{DiaFis:11}).
The spectral gap is immediately related to a phase transition of the pressure function $t\mapsto P(t\varphi)$ at some characteristic parameter $t_c$. Moreover, for $t_c\varphi$ there exist two equilibrium states both of positive entropy (we call this a \emph{rich phase transition}).
To our knowledge, this is the first example of a phase transition in a topologically transitive local diffeomorphism associated to several equilibrium states with positive entropy.
This work is an extension of~\cite{DiaGel:,LepOliRio:10} where topological properties and phase transitions related to homoclinic classes are studied. In these examples the exposed sets are single fixed points.

Inside the locally maximal set $\Lambda$ coexist intermingled sets of different type of hyperbolicity which can be seen, for example, from the coexistence of periodic points with unstable manifolds of different dimensions. However, there exists a measure of maximal entropy with nonpositive central Lyapunov exponent. Hence, we may conclude that the dynamics on $\Lambda$ is not predominantly expanding.
Moreover, in some parameter range this measure is unique and its central exponent is negative.

Besides these dynamical features, our examples possess also topologically a rich structure of the fibers of the skew-product map. Following the approach in~\cite{DiaGel:}, one can show that there exists uncountably many fibers that contain a single point only and uncountably many fibers that contain a continuum.

Let us briefly explain to what corresponds an ``exposed piece of dynamics" in our example. We consider genuinely nonhyperbolic homoclinic classes $\Lambda$  containing infinitely many hyperbolic periodic points of different type. Although this class is transitive, it properly contains a ``lateral" horseshoe $\Lambda_{02}\subset\Lambda$ whose saddles are not homoclinically related to periodic hyperbolic points outside $\Lambda_{02}$. This lateral horseshoe is a kind of extreme of $\Lambda$. One of the equilibrium states involved in the phase transition is supported on the horseshoe $\Lambda_{02}$ that has positive central Lyapunov exponents and positive entropy. The other coexisting state lives on $\Lambda\setminus\Lambda_{02}$ and also has positive entropy.

Let us conclude with a heuristic remark. That a homoclinic classes is properly contained in a bigger one seems to be the underlying mechanism for the gap in the spectrum and hence for the phase transition.  However, this configuration is somehow atypical. Indeed, for typical $C^1$ diffeomorphisms homoclinic classes either are disjoint or coincide~\cite{CarMorPac:03} and the spectrum of Lyapunov exponents has no gaps~\cite{AbdBonCroDiaWen:08}.

The paper is organized as follows. We provide the details of our example in  Section~\ref{s:2}. In Section~\ref{s:3} we explain its topological properties and establish topological transitivity, we postpone the proof to Section~\ref{s:7}.  In Section~\ref{s:4} we prove the existence of a gap in the spectrum of central Lyapunov exponents, see Proposition~\ref{p.beta} and Corollary~\ref{c.spectrum}.
In Section~\ref{s:5} we prove the existence of a rich phase transition (see Theorem~\ref{t.hund}). Moreover, in Section~\ref{s.52} we discuss periodic fibers that give rise to a measure of maximal entropy with nonpositive or even negative central exponent (see Proposition~\ref{p.maxent1} and Corollary~\ref{c.entuni}). In Section~\ref{s:6} we briefly discuss more general potentials and provide a sufficient condition for the existence of a phase transition in our setting.

\section{A class of skew-products}\label{s:2}

In this section we construct the class of maps that will be studied in this paper.
Consider the cube $\widehat \CC=[0,1]^2$ and a diffeomorphism $\Phi$ defined on $\RR^2$
having a horseshoe $\Ga$ in $\widehat\CC$ conjugate to the
full shift $\sigma$ of three symbols.
Denote by  $\varpi\colon \Ga \to \Si_3$ the conjugation map
$\varpi \circ \Phi=\sigma \circ \varpi$. We consider the following naturally associated sub-cubes
$\widehat \CC_0$, $\widehat \CC_1$, $\widehat \CC_2$ of $\widehat \CC$
\[
	\widehat \CC_i\eqdef
	\big\{X\in\CC\colon \varpi (X) = (\ldots\xi_{-1}.\xi_0\xi_1\ldots)
	\text{ with }\xi_0=i\big\}.
\]	
Let $\CC= \widehat \CC\times [0,1]$ and $\CC_i= \widehat \CC_i\times [0,1]$. We consider the map $F\colon\CC\to \RR^3$ defined by
\begin{equation}\label{e.defF}
    F(\widehat x,x) \eqdef
        (\Phi(\widehat x), f_i(x))
    \quad\mbox{ if }\,\,
    X=(\widehat x,x)\in \widehat \CC_i\times [0,1],
\end{equation}
where $f_i \colon [0,1]\to [0,1]$, $i=0$, $1$, $2$, are assumed to be $C^1$
injective interval maps satisfying properties that we are going to specify now.
\begin{figure}
\begin{minipage}[c]{\linewidth}
\centering
\begin{overpic}[scale=.55
  ]{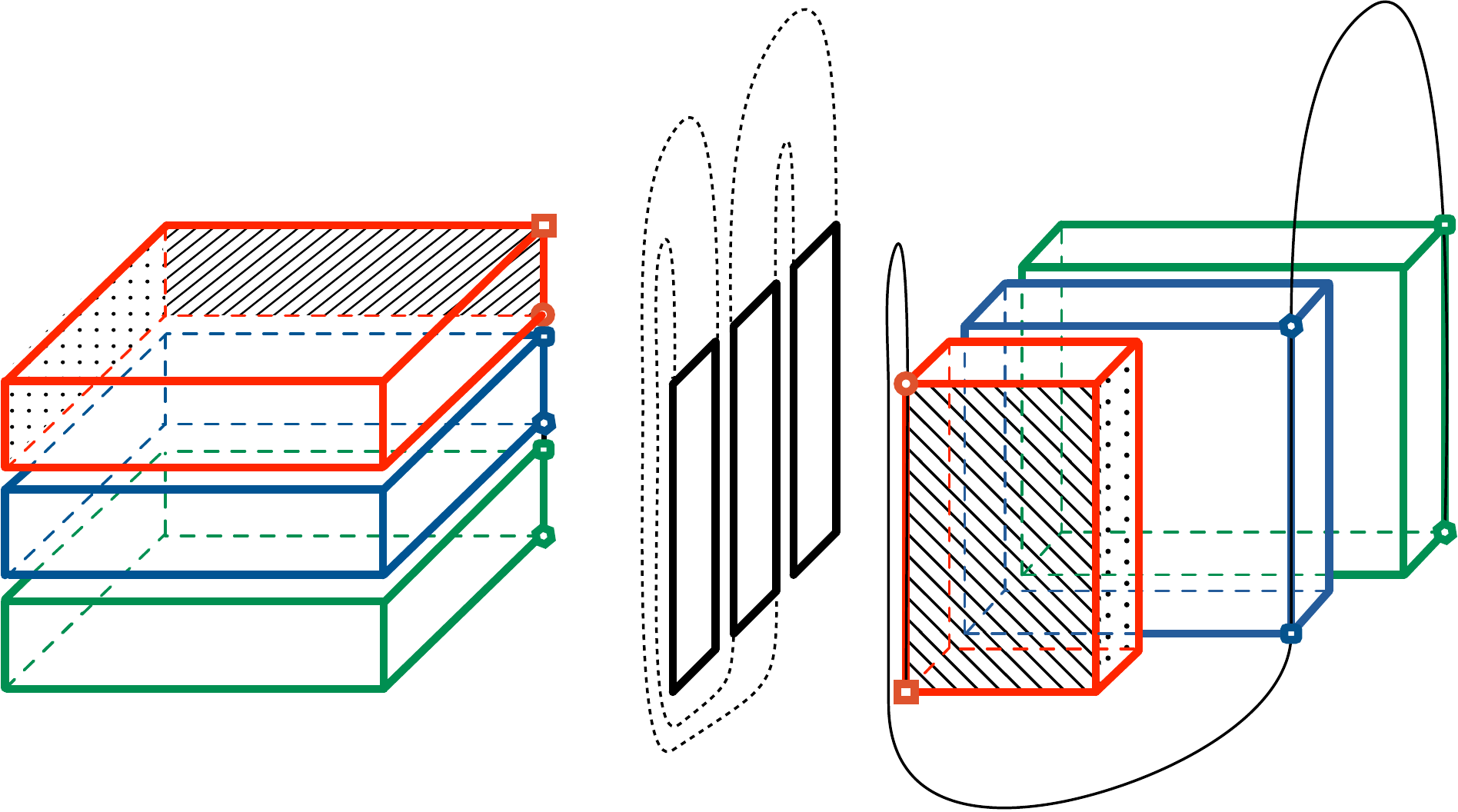}
 \end{overpic}
\caption{Construction of the maximal invariant set $\Lambda$}
\label{fi.neu}
\end{minipage}
\end{figure}
To produce a simple example, we will assume that $\Phi|_{\CC_i}$ is affine.
We also assume that
 the rate of expansion (contraction) of the
horseshoe is stronger than any expansion (contraction) of $f_0$, $f_1$, and $f_2$.
In this way the $DF$-invariant  splitting $E^\sst\oplus E^c\oplus E^\uut$
given by
\begin{equation}\label{e.splitt}
    E^\sst\eqdef\RR^s\times\{(0^u,0)\}, \,
    E^c\eqdef\{(0^s,0^u)\}\times\RR, \,
    E^\uut\eqdef\{0^s\}\times\RR^u\times\{0\}
\end{equation}
is dominated and $E^{ss}$ and $E^{uu}$ are uniformly hyperbolic.
We denote by $W^{ss}$ and $W^{uu}$ the corresponding strong stable and strong unstable manifolds associated  to $E^{ss}$ and $E^{uu}$.
\smallskip

The following conditions (F0),(F1), and (F2) will imply that the system of the fiber maps $\{f_0,f_1,f_2\}$ is of cycle type and mixes expansion and contraction behavior -- compare also Figure~\ref{fi.neu}.

\begin{itemize}
\item[\textbf{(F0)}] 
    The map $f_0$ is increasing and has exactly two hyperbolic fixed
    points, the point $q_0=0$ (repelling) and the point $p_0=1$ (attracting).
    Let $\beta_0=f_0^\prime (0)>1$ and $\lambda_0=f_0^\prime (1)\in
    (0,1)$. Moreover, $\lambda_0\le f_0^\prime(x)$ for all $x \in [0,1]$.
    \\[-0.3cm]
\item[\textbf{(F1)}]
    The map $f_1$ is an affine contraction with negative
    derivative
    \[
    f_1(x)\eqdef \gamma\,(1-x),
    \]
    where $\gamma\ge\lambda_0$. We denote by $p_1$ the attracting fixed point of $f_1$. Note that $f_1(1)=0$ (cycle condition).
    \\[-0.3cm]
\item[\textbf{(F2)}]
    The map $f_2$ is increasing and has two hyperbolic fixed
    points, the point $q_2=0$ (repelling) and the point $p_2\in(0,1)$ (attracting).
    We have $\beta_2=f_2^\prime (0)>1$.
    \\[-0.3cm]
\end{itemize}	

The next condition (F01) guarantees that suitable compositions of $f_0,f_1$ exhibit some expanding behavior (see Step~\ref{s.expanding}).
We formulate this condition in the simplest case where
the derivative $f_0'$ is decreasing in $[0,1]$.

\smallskip
\begin{itemize}
\item[\textbf{(F01)}]
The derivative $f_0'$ is decreasing in $[0,1]$ and satisfies
$$
\gamma
\, \left(
	\frac{\lambda_0^3 \, (1-\lambda_0)}{1-\beta_0^{-1}} \right)>1.
$$
\end{itemize}
Note that given $\gamma, \lambda_0\in(0,1)$, this condition
is clearly satisfied if $\beta_0>1$ is sufficiently close to $1$.

To establish the existence of phase transitions, we will require one further property (F012) giving constraints to the variation of $f_0'$ and $f_2'$ and to $f_0'(1)$. It  will be specified in Section~\ref{s:4}.

\begin{figure}
\begin{minipage}[c]{\linewidth}
\centering
\begin{overpic}[scale=.40
  ]{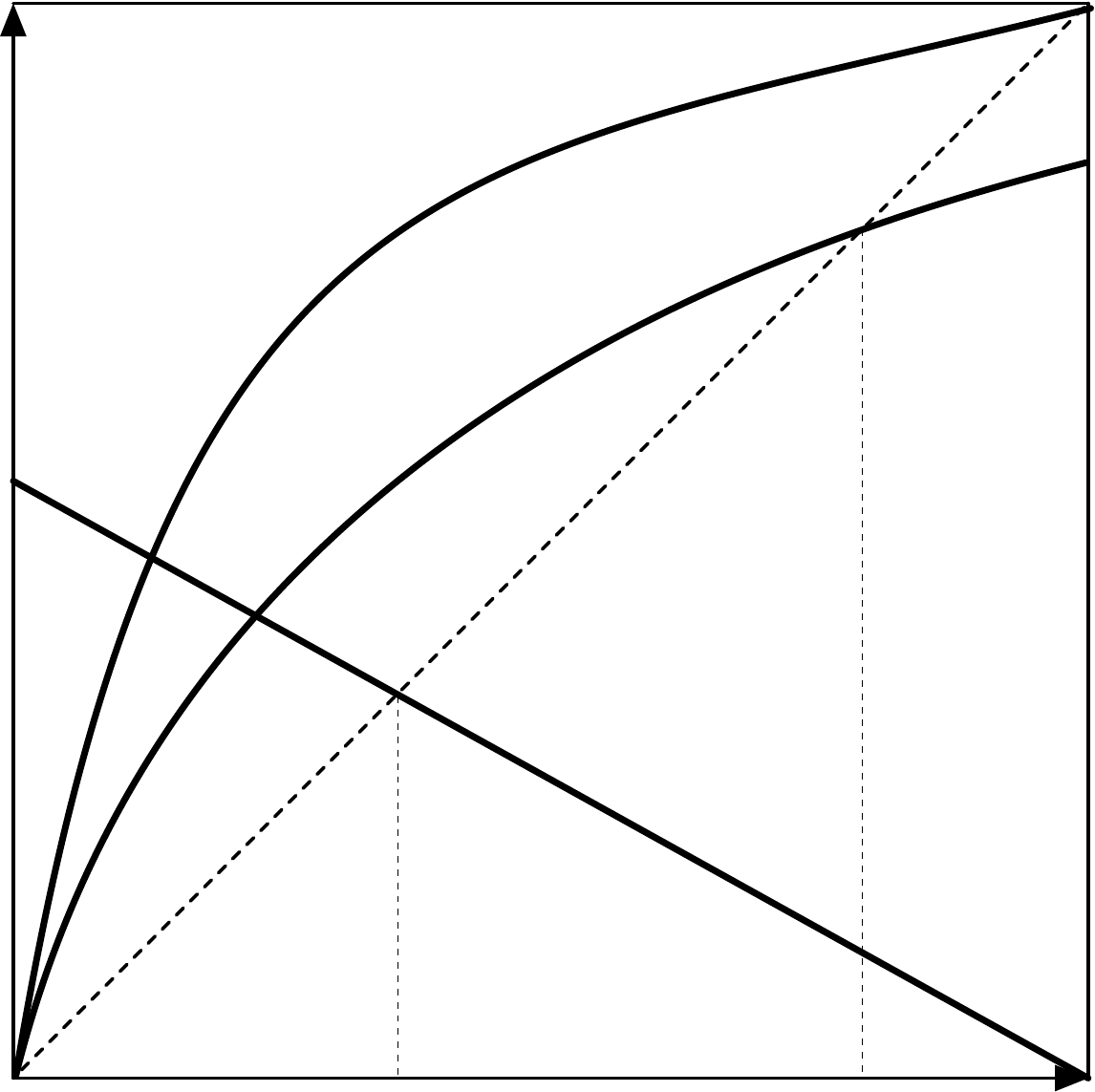}
      	\put(40,89){$f_0$}	
	\put(50,35){$f_1$}	
        \put(40,68){$f_2$}	
	\put(0,-6){$0$}	
	\put(98,-6){$1$}	
      	\put(35,-6){$p_1$}	
      	\put(77,-6){$p_2$}	
      	\end{overpic}
\end{minipage}
\smallskip
\caption{The maps of the IFS (in a special case $\beta_2<\beta_0$)}
\end{figure}

\section{Dynamical properties}\label{s:3}

We  introduce some notations and state some dynamical properties of $F$.
The skew product structure of $F$ allows us to reduce the study of its dynamics to the study of the IFS associated by the maps $f_i$ (see also Section~\ref{s:4}).

Consider the sequence space $\Sigma_3=\{0,1,2\}^\ZZ$ endowed with the metric $d(\xi,\eta)=\sum_{i\in\ZZ}2^{-\lvert i\rvert}\lvert\xi_i-\eta_i\rvert_\ast$ for $\xi=(\ldots\xi_{-1}.\xi_0\xi_1\ldots)$, $\eta=(\ldots\eta_{-1}.\eta_0\eta_1\ldots)\in\Sigma_3$, where $\lvert\xi_i-\eta_i\rvert_\ast=1$ if $\xi_i\ne\eta_i$ and $0$ otherwise.
Every sequence $\xi\in\Sigma_3$ is given by $\xi=\xi^-.\xi^+$, where $\xi^+\in\Sigma_3^+\eqdef\{0,1\}^{\NN_0}$ and $\xi^-\in\Sigma_3^-\eqdef\{0,1\}^{-\NN}$.
We denote by $(\xi_0\ldots\xi_{m-1})^\ZZ$ the periodic sequence of period $m$ such that $\xi_i=\xi_{i+m}$ for all $i$ and always refer to the least period of a sequence.

\subsection{Fixed points and invariant sets}

Denote by $\theta_i=(\theta_i^s,\theta_i^u)=\varpi^{-1}(i^\ZZ)$, $i=0$, $1$, $2$, the fixed points of the horseshoe map $\Phi$.
The structure of the horseshoe and the choice of $f_0,f_1,f_2$ imply that $F$ possesses five fixed points given by
\begin{equation}\label{e.defPQ}\begin{split}
    &P_0\eqdef(\theta_0,1),\quad
    P_1\eqdef(\theta_1,p_1),\quad
    P_2\eqdef(\theta_2,p_2),\\
    &Q_0\eqdef(\theta_0,0),\quad
    Q_2\eqdef(\theta_2,0).
\end{split}\end{equation}
Observe that the {\emph{u-index}} (dimension of the unstable manifold) of $P_i$ is $1$ while the one of $Q_i$ is $2$, for all $i$.

Let us consider the lateral two-legged horseshoe of $F$
\begin{equation}\label{e.twoleg}
    \Lambda_{02}\eqdef \Gamma_{02}\times \{0\}, \quad\text{where}\quad
    \Gamma_{02}\eqdef  \varpi^{-1}(\{0,2\}^\ZZ).
\end{equation}
Notice that $\Lambda_{02}$ is invariant with respect to $F$ since for every $\theta\in\varpi^{-1}(\{0,2\}^\ZZ)$ and $i=0$, $2$ we have $F(\theta,0)=(\Phi(\theta),f_i(0))=(\theta',0)$ with $\theta'\in\varpi^{-1}(\{0,2\}^\ZZ)$.
This lateral horseshoe is topologically transitive, uniformly hyperbolic, and contains the saddles $Q_0$ and $Q_2$.

\subsection{Homoclinic classes}

We will focus on the maximal invariant set $\Lambda$ of $F$ in the cube $\CC=[0,1]^3$, $\Lambda \eqdef \bigcap_{i\in \mathbb{Z}} F^i (\CC)$, that will be a special type of transitive set called  a homoclinic class. Inside the set $\Lambda$ coexist intermingled hyperbolic sets of different ``types'' ($u$-indices). This will give rise to the existence of heterodimensional cycles associated to periodic points in $\Lambda$. This is the underlying mechanism to produce a rich dynamics mixing hyperbolicity of different types.

\begin{defi}[Homoclinic class]\label{def:homcla}
    {\rm
    Given a saddle point $P$ of $F$ its  \emph{homoclinic class} is the closure of the transverse intersections of the stable and unstable manifolds of the orbit of $P$. Two saddles $P$ and $Q$ are \emph{homoclinically related} if the invariant manifolds of their orbits meet cyclically and transversely. A homoclinic class is \emph{non-trivial} if it contains at least two different orbits.

    In the following we restrict our attention to the dynamics inside the cube $\CC$. We call the closure of the set of points that are in the transverse intersections of the stable and unstable manifolds of the orbit of $P$ and
whose  orbit is entirely contained in $\CC$ the \emph{homoclinic class of $P$ relative to $\CC$}. We denote this set by $H(P,F)$.
    }
\end{defi}

\begin{rema}\label{r.funda}{\rm
	Observe first  that homoclinically related saddles all have the same $u$-index.
Note also
that the (relative) homoclinic class $H(P,F)$ coincides with the closure of all saddle points that are homoclinically related to $P$ relative to the cube $\CC$ (i.e.,
the orbits of the transverse intersections are contained in $\CC$).
  However, this
closure may contain periodic points that are not homoclinically related to $P$. Indeed, this paper illustrates such a situation and, in fact, is an essential ingredient of our example. Finally, a homoclinic class is always
{\emph{transitive}} (existence of a dense orbit) and uncountable if non-trivial.
}\end{rema}

Finally, we see that $\Lambda =\cap_{i\in \ZZ} F^i(\CC)$ is nonhyperbolic and  transitive.

\begin{prop}\label{p.transitive}
	There is a saddle $Q^*$ of $u$-index two such that the set $\Lambda$ is the homoclinic class of $Q^\ast$ relative $\CC$. In particular, the set $\Lambda$ is topologically transitive.
\end{prop}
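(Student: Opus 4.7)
The plan is to choose $Q^* = Q_0 = (\theta_0, 0)$, which has $u$-index two: the horseshoe contributes expansion along $E^{uu}$, the central direction is expanded at $Q_0$ with rate $f_0'(0) = \beta_0 > 1$ by (F0), and $E^{ss}$ is uniformly contracted. The inclusion $H(Q_0, F) \subseteq \Lambda$ is automatic, since $H(Q_0, F)$ is closed and $F$-invariant and by its very definition consists of points whose full orbit stays in $\CC$, hence sits in the maximal invariant set $\Lambda$. Topological transitivity of $\Lambda$ will then follow from the opposite inclusion together with the general fact, recorded in Remark~\ref{r.funda}, that a non-trivial homoclinic class is transitive.

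The first step toward $\Lambda \subseteq H(Q_0, F)$ is to capture the lateral horseshoe $\Lambda_{02}$. Since $\Lambda_{02}$ is uniformly hyperbolic, locally maximal in some neighborhood, topologically transitive, and contains $Q_0$, standard hyperbolic theory yields that every saddle of $\Lambda_{02}$ is homoclinically related to $Q_0$ through transverse intersections whose orbits stay in $\Lambda_{02} \subset \CC$. This settles the part of $\Lambda$ sitting at central coordinate $x=0$ over the sub-shift on $\{0,2\}$.

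The second step is to enlarge the homoclinic class to the whole of $\Lambda$ using the cycle condition $f_1(1) = 0$ together with the expansion produced by (F0) near $x = 0$ and by (F01) for compositions passing through the sub-cube $\CC_1$. Forward iterates of a local unstable disc $W^u_\loc(Q_0) \subset \Lambda_{02}$ grow in the central direction each time they traverse $\CC_0$, and whenever they cross $\CC_1$ they are folded back toward $x = 0$ by $f_1$. Used inductively, this should permit one to realize any admissible symbolic itinerary in $\Sigma_3$ by arcs of $W^u(Q_0)$, and a dual argument under backward iteration spreads $W^s(Q_0)$ across all fibers. Running the two manifolds toward one another produces transverse intersections in arbitrarily prescribed open subsets of $\Lambda$.

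The main obstacle lies in handling the periodic points of $u$-index one, such as $P_1$ and $P_2$, which for dimensional reasons cannot be homoclinically related to $Q_0$ but must still belong to $H(Q_0, F)$. The standard mechanism is to exhibit a heterodimensional cycle between $Q_0$ and each such saddle $R$ and to unfold it so as to produce transverse homoclinic points of $Q_0$ accumulating on $R$; the cycle condition $f_1(1) = 0$ together with the presence of the attracting fixed point $p_0 = 1$ of $f_0$ and the expansion in (F01) is precisely what supplies the required heteroclinic connections. Carrying this out uniformly at \emph{every} point of $\Lambda$, rather than only at periodic points, is the delicate technical core, and I would expect the argument postponed to Section~\ref{s:7} to combine density of periodic orbits in $\Lambda$ with a coding/shadowing argument along admissible sequences in $\Sigma_3$.
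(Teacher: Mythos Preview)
Your choice $Q^\ast=Q_0$ does not work, and its failure is precisely the phenomenon the paper is built on: immediately after the proposition it is recorded that $H(Q_0,F)=\Lambda_{02}\subsetneq\Lambda$. The reason is concrete. The local stable manifold of $Q_0$ is the segment $[0,1]\times\{\theta_0^u\}\times\{0\}$; since $0$ is a repelling fixed point for both $f_0$ and $f_2$ while $f_1^{-1}(0)=1$, backward iteration keeps the central coordinate in $\{0,1\}$, so the \emph{entire} one-dimensional stable manifold $W^s(Q_0,F)$ lives at central height $x\in\{0,1\}$. On the other hand each $f_i$ maps the open interval $(0,1)$ into itself, so the forward orbit of any point at interior height never reaches $\{0,1\}$. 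Hence every transverse homoclinic point of $Q_0$ has central coordinate $0$, and their closure is exactly $\Lambda_{02}$. Your step of ``spreading $W^s(Q_0)$ across all fibers'' under backward iteration therefore cannot succeed; the stable manifold is trapped at the boundary heights.

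The paper's $Q^\ast$ is instead a periodic point with central coordinate strictly in $(0,1)$, produced by the \emph{expanding itineraries} that condition (F01) guarantees: there is a finite word $\xi(J)$ in the letters $\{0,1\}$ for which $f_{[\xi(J)]}$ has an expanding fixed point $q_J^\ast\in(0,1)$ whose local unstable manifold covers a full fundamental domain of $f_0$ near $0$. For this $Q^\ast$ both invariant manifolds genuinely fill the interior fibers. The lateral horseshoe $\Lambda_{02}$ and the cycle set at heights $0$ and $1$ are then captured not via homoclinic relation---the argument above shows $Q_0$ is not homoclinically related to any index-two saddle at interior height---but by proving that $W^s(Q_0,F)$ and $W^{uu}(Q_0,F)$ lie in the \emph{closures} of $W^s(Q^\ast,F)$ and $W^u(Q^\ast,F)$ respectively, so that transverse homoclinic points of $Q^\ast$ accumulate on them.
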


This proposition is a version of the results in~\cite{DiaGel:} considering skew product dynamics over the shift of two symbols whose central dynamics is given by the maps $f_0$ and $f_1$. The only difference is that the structure of $\Lambda$ here is slightly more complicated due to the existence of the additional  ``leg'' of the horseshoe and its associated map $f_2$. This extra ``leg'' is also responsible for the existence of the lateral horseshoe and gives rise to more combinatorics in the orbits. We postpone the proof to Section~\ref{s:7}.

From similar observations we conclude also the following relations showing the special nature of the lateral horseshoe
\[
	\Lambda_{02}=H(Q_0,F)=H(Q_2,F)\subsetneq H(P_0,F)
	\subset\Lambda.
\]

\section{Lyapunov exponents of the IFS. Spectral gap}\label{s:4}

In this section we study dynamical properties of the underlying  iterated function system (IFS) generated by the maps $f_i$, establishing the existence of a gap in the spectrum of central Lyapunov exponents (Proposition~\ref{p.beta}).
This gap will correspond to a gap in the spectrum of the central exponents of diffeomorphism $F$ due to the skew product structure (see Corollary~\ref{c.spectrum}).

We use the following notation for concatenated maps of the IFS.
Given a \emph{finite} sequence
$(\xi_0\ldots \xi_m)$, $\xi_i\in\{0,1,2\}$, let
\[
    f_{[\xi_0\ldots\,\xi_m]}
    \eqdef f_{\xi_m} \circ \cdots \circ f_{\xi_1}\circ f_{\xi_0} \colon [0,1]\to [0,1].
\]
Given a finite sequence $(\xi_{-m}\ldots\xi_{-1})$ let
\[
    f_{[\xi_{-m}\ldots\,\xi_{-1}.]}
    \eqdef  (f_{\xi_{-1}}\circ\ldots\circ f_{\xi_{-m}})^{-1}.
\]
Similarly, given a finite sequence $(\xi_{-m}\ldots\xi_{-1}.\xi_0\ldots \xi_n)$, let
\[
    f_{[\xi_{-m}\ldots\,\xi_{-1}.\xi_0\ldots\,\xi_n]}
    \eqdef  f_{[\xi_0\ldots\,\xi_n]} \circ f_{[\xi_{-m}\ldots\,\xi_{-1}.]}.
\]
An one-sided infinite sequence $(\ldots\xi_{-2}\xi_{-1}.)\in\Sigma_3^-$ is said to be \emph{admissible} for a point $x$ if the map $f_{[\xi_{-m}\ldots\,\xi_{-1}.]}$ is well-defined at $x$ for all $m\ge 1$.
By writing $(x,\xi)$ we always mean that $\xi$ is admissible for $x$.

Given $p\in[0,1]$ and a sequence $\xi=(\ldots\xi_{-1}.\xi_0\xi_1\ldots)\in\Sigma_3$ that is admissible for $p$, the \emph{(forward) Lyapunov exponent} of $p$ with respect to $\xi$ is defined by
\[
    \chi(p,\xi)\eqdef
    \lim_{n\to\infty}\frac{1}{n}
    \log \, \big\lvert (f_{[\xi_0\ldots\,\xi_{n-1}]})'(p)\big\rvert
\]
whenever this limit exists.
Otherwise we denote by $\underline\chi(p,\xi)$ and $\overline\chi(p,\xi)$ the \emph{lower} and the \emph{upper Lyapunov exponent} defined by taking the lower and the upper limit, respectively.
Note that, in fact, $\chi(p,\xi)$ depends only on the positive part $\xi^+$ of $\xi=(\xi^-.\xi^+)$ only. Hence, when considering exponents of a pair $(p,\xi)$ in the following, we will disregard the hypothesis that $\xi$ is admissible.

Let us consider the symbolic description of the lateral horseshoe $\Lambda_{02}$ together with its stable manifold. This is given by the set $\cE$ of ``exceptional points'' of the IFS:
\[\begin{split}
    \cE\eqdef &
    \Big\{ (0,\xi)\colon
    		\xi\in \{0,2\}^\ZZ \Big\}\,\cup \\
		&
    \Big\{\big(1,(0^{-\NN}.0^k\,1\,\xi^+)\big),\,
    		\big(0,(0^{-\NN} 1\,\xi_k.\xi^+)\big) \colon	
    		k\ge0, \xi_k\in \{0,2\}^k, \xi^+\in\{0,2\}^\NN\Big\}.
\end{split}\]
As $\cE$ codes all points in the lateral horseshoe $\Lambda_{02}$ together with its stable manifold, its central Lyapunov spectrum is an interval. 

Let
    	\[
    		\beta_{02}^-\eqdef\min\{\beta_0,\beta_2\},\quad
		\beta_{02}^+\eqdef\max\{\beta_0,\beta_2\}.
    	\]
To show the following lemma, it is enough  to observe that, by construction,
$\beta^-_{02}$ and $\beta^+_{02}$ are the smallest and largest central exponents of $\Lambda_{02}$.

\begin{lemm}
	$\displaystyle \big\{\overline\chi(p,\xi)\colon (p,\xi)\in \cE \,\big\}
                = [\log\beta^-_{02},\log\beta^+_{02}\,].$
\end{lemm}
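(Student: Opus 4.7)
The plan is to reduce the computation of $\overline\chi(p,\xi)$ on $\cE$ to that of upper Birkhoff averages of the two-valued function $\eta\mapsto\log\beta_{\eta_0}$ on the shift space $\{0,2\}^\NN$. The first step is to observe that for every $(p,\xi)\in\cE$ there is a finite $N_0=N_0(p,\xi)\ge 0$ such that $f_{[\xi_0\ldots\xi_{N_0-1}]}(p)=0$ and $\xi_i\in\{0,2\}$ for all $i\ge N_0$. Indeed, for the first family ($p=0$, $\xi\in\{0,2\}^\ZZ$) take $N_0=0$; for the third family ($p=0$ with a $\{0,2\}^\NN$-tail) the orbit stays at the common fixed point $0$ of $f_0$ and $f_2$; for the second family ($p=1$ with tail $0^k\,1\,\xi^+$) the iterate stays at the fixed point $1$ of $f_0$ for $k$ steps, then $f_1(1)=0$, and from that moment on the orbit sits at $0$ under the $\{0,2\}^\NN$-tail.

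Because $f_0'(0)=\beta_0$ and $f_2'(0)=\beta_2$, and because $|(f_{[\xi_0\ldots\xi_{N_0-1}]})'(p)|$ is a bounded nonzero constant independent of $n$, for all $n>N_0$ one has
\[
    \frac{1}{n}\log\bigl|(f_{[\xi_0\ldots\xi_{n-1}]})'(p)\bigr|
    = \frac{1}{n}\log\bigl|(f_{[\xi_0\ldots\xi_{N_0-1}]})'(p)\bigr|
      + \frac{1}{n}\sum_{i=N_0}^{n-1}\log\beta_{\xi_i},
\]
and passing to $\limsup$ the first term vanishes. Hence
\[
    \overline\chi(p,\xi)
    = \limsup_{n\to\infty}\frac{1}{n}\sum_{i=N_0}^{n-1}\log\beta_{\xi_i},
\]
so the set on the left-hand side of the lemma coincides with the collection of all upper Birkhoff averages on $\{0,2\}^\NN$ of the function $\eta\mapsto\log\beta_{\eta_0}$, taking the two values $\log\beta_0$ and $\log\beta_2$.

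The inclusion $\subseteq[\log\beta_{02}^-,\log\beta_{02}^+]$ is then immediate termwise. For the reverse inclusion, given $\alpha$ in this interval, write $\alpha=(1-t)\log\beta_{02}^-+t\log\beta_{02}^+$ with $t\in[0,1]$, and construct $\eta\in\{0,2\}^\NN$ whose upper density of the symbol realizing $\log\beta_{02}^+$ equals $t$. A standard block construction accomplishes this: take lengths $n_j\to\infty$ growing fast enough and alternately insert blocks of the two symbols so that at the end of each block $j$ the running frequency of the ``large'' symbol oscillates with $\limsup=t$ and the contribution of the tail of each block dominates that of all earlier blocks. Since the Birkhoff average is an affine function of this running frequency, its $\limsup$ equals exactly $\alpha$.

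The only mildly delicate point, which is the closest thing to a main obstacle, is the quantitative choice of the block lengths $n_j$: they must grow fast enough that the long-block contributions overshadow past history and the $\limsup$ is attained rather than merely approached. This is routine and I do not anticipate any serious difficulty.
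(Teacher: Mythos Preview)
Your proposal is correct and follows essentially the same idea as the paper, which dispatches the lemma in one sentence by observing that $\cE$ codes $\Lambda_{02}$ together with its stable manifold and that the central Lyapunov spectrum of this two-legged horseshoe is the interval $[\log\beta_{02}^-,\log\beta_{02}^+]$. You have simply unpacked this remark into an explicit argument: the finite-time reduction to the orbit $(0,\{0,2\}^\NN)$ is exactly the ``stable manifold'' statement, and your Birkhoff-average computation at $0$ is the concrete form of ``central exponents of $\Lambda_{02}$''.

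One minor simplification: for the surjectivity onto $[\log\beta_{02}^-,\log\beta_{02}^+]$ you do not need an oscillating block construction realizing a prescribed $\limsup$. It suffices to take any $\eta\in\{0,2\}^\NN$ in which the asymptotic frequency of the symbol corresponding to $\beta_{02}^+$ \emph{converges} to $t$ (for instance $\eta_i$ equals that symbol iff $\lfloor (i+1)t\rfloor>\lfloor it\rfloor$); then the ordinary limit of the Birkhoff averages exists and equals $\alpha$, so in particular the $\limsup$ does. This avoids the ``mildly delicate'' choice of block lengths altogether.
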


To prove the main result of this section, we need an additional assumption to be satisfied.
\smallskip
\begin{itemize}
\item[\textbf{(F012)}]
	We have $f_0^\prime(x), f_2'(x)\le\beta^+_{02}$ for all $x \in [0,1]$.
	There exists an interval $H=[0,\delta]$ such that
	$H$ and
	$H'=f_1^{-1}(H)$
	satisfy
	\begin{equation}\label{e.H0}\begin{split}
	f_1(H)\cap H =\emptyset,
	\,\,\, f_1&(H')\cap H' =\emptyset,
	\,\,\, f_1([0,1])\cap H' =\emptyset,\\
	&f_2([0,1])\cap H'=\emptyset.
\end{split}\end{equation}
    Assume that
	\[	
	\beta' \eqdef \max\{f_0'(x), f_2'(x)\colon x\notin H\}
		<\beta^-_{02}.
	\]
	Moreover, let    	
    \[\begin{split}
	\beta_{H}&\eqdef  \min\{f_0'(x),f_2'(x) \, \colon \, x\in H\}
		<\beta^-_{02},\\
	\lambda' &\eqdef \max\{f_0'(x) \, \colon \, x\in H'\}
		<1
	\end{split}\]
	and assume that
   \begin{equation}\label{e.derivatives}
    	\lvert\log\lambda_0\rvert\frac{\log\beta^+_{02}}{\log\beta_H}
	-\lvert\log\lambda'\rvert
	+\frac 2 3\log\beta^+_{02}<\frac 3 4  \log\beta'.
    \end{equation}
    Finally, let us also assume that with $L\ge1$ satisfying
    \begin{equation}\label{edefL}
    	L> 4\,\frac{\lvert\log\lambda_0\rvert\,\log\beta^+_{02}}{\log\beta_H\,\log\beta'}
    \end{equation}
    we have
    \begin{equation}\label{e.derivatives2}
    	\frac{\log((\beta^+_{02})^L\,\gamma)}{L+1} < \log\beta'.
 	 \end{equation}
\end{itemize}
\smallskip
Clearly, (F012) can be guaranteed if $\beta_2$ is close enough to $\beta_0$ and if $f_2$ is non-linear close to $0$ and does not contract the length of the unit interval too much. Then we can choose $\delta$ very small and can guarantee that $\beta_H$ is close to $\beta^+_{02}$ and that $\lambda'$ is close to $\lambda_0$ in order to guarantee~\eqref{e.derivatives}.
\smallskip

We obtain the following gap of the full range of possible exponents.

\begin{prop}\label{p.beta}
    Let
    \[
                \widetilde\chi\eqdef
                \sup\big\{\overline\chi(p,\xi)\colon
                		(p,\xi)\notin \cE \big\}.
    \]
    Under the hypothesis (F012) we have $\widetilde\beta\eqdef \exp\widetilde\chi\in(1,\beta^-_{02})$.
\end{prop}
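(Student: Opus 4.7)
The plan is to split the argument into the lower bound $\widetilde\chi>0$ and the upper bound $\widetilde\chi<\log\beta^-_{02}$. For the lower bound, it suffices to exhibit a single pair $(p,\xi)\notin\cE$ with positive upper exponent. I would take the periodic word $\xi=(0^n1)^{\ZZ}$ for large $n$ together with the periodic point $p$ of $f_1\circ f_0^n$ located near the repelling fixed point $q_0=0$; the per-period log-derivative $(n\log\beta_0+\log\gamma)/(n+1)$ tends to $\log\beta_0>0$, so $\overline\chi(p,\xi)>0$ for $n$ large enough.

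The heart of the proposition is the upper bound. I would first dispense with the case in which $\xi^+$ contains only finitely many $1$'s: then the tail orbit is driven by $\{f_0,f_2\}$ alone, and since $(p,\xi)\notin\cE$ the forward orbit is not in the stable set of the common fixed point $0$, so it converges to an attracting fixed point of the tail sub-IFS (either $p_0=1$ of $f_0$ or $p_2$ of $f_2$). In particular $\overline\chi(p,\xi)\le 0<\log\beta^-_{02}$. It remains to treat the case where $1$ appears infinitely often in $\xi^+$.

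In this case I would slice the forward orbit at the hitting times $n_1<n_2<\cdots$ of the symbol $1$ and control the log-derivative on each excursion $[n_k,n_{k+1})$. Within an excursion only $f_0$ and $f_2$ are applied; I partition its iterations according to the position of the orbit in $H$, $H'$, or $M\eqdef[0,1]\setminus(H\cup H')$, using the derivative bounds of (F012): $\beta^+_{02}$ on $H$, $\lambda'$ for $f_0$ on $H'$, and $\beta'$ on the complement of $H$, together with the single factor $\gamma$ from $f_1$. The disjointness relations~\eqref{e.H0} force that $H$ can be entered only via an $f_1$-image of a point in $H'$. A length-$\ell$ block of consecutive iterations in $H$ then forces (via the lower bound $\beta_H$ and the linearisation of $f_0$ near $0$) that the preceding point in $H'$ has distance at most $\delta\gamma^{-1}\beta_H^{-\ell}$ from $1$, and the linearisation of $f_0$ near $1$ in turn forces the preceding approach to $1$ in $H'$ to occupy at least of order $\ell\log\beta^+_{02}/|\log\lambda_0|$ iterations of $f_0$, each contributing a derivative at most $\lambda'<1$.

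Combining these blockwise contributions, I expect to show that on any excursion of length $N\ge L$ the mean log-derivative is bounded above by the left-hand side of~\eqref{e.derivatives}, hence strictly less than $\tfrac34\log\beta'<\log\beta^-_{02}$. For short excursions of length $N<L$ the trivial bound $(\beta^+_{02})^L\gamma$ together with~\eqref{edefL} and~\eqref{e.derivatives2} already yields a per-iteration log-derivative bounded by $\log\beta'<\log\beta^-_{02}$. Summing over excursions, $\overline\chi(p,\xi)<\log\beta^-_{02}$ uniformly in $(p,\xi)\notin\cE$. The main obstacle is the quantitative bookkeeping in the previous step: one must precisely relate the depth of penetration of the orbit into $H$ with the length of its preceding $f_0$-approach to $1$ along $H'$, so that the contraction accumulated in $H'$ and by $f_1$ dominates the expansion accumulated in $H$ when fed into~\eqref{e.derivatives}. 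This is where the apparently arcane coefficients $\tfrac23$, $\tfrac34$ and the threshold $L$ in (F012) get used.
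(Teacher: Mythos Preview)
Your overall strategy is the paper's: an orbit with upper exponent close to $\log\beta^-_{02}$ must visit $H$ infinitely often; each $H$-visit is entered via an $f_1$-step from $H'$, which in turn is preceded by a long $f_0$-approach through $H'$; the contraction accumulated in $H'$ (rate $\lambda'$) then caps the expansion accumulated in $H$ (rate $\le\beta^+_{02}$) via the constants of (F012), forcing the mean log-derivative below $\log\beta'$. The paper implements exactly this via two claims bounding first the depth $p_{r_k}\ge\lambda_0^{N_k+1}\delta$ in terms of the $H'$-time $N_k$, and then the resulting $H$-time by $M_k\approx N_k\lvert\log\lambda_0\rvert/\log\beta_H$.

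Two points need correction. First, your ``finitely many $1$'s'' case is argued incorrectly: if the tail of $\xi^+$ is a genuine mix of $0$'s and $2$'s the orbit does \emph{not} converge to a single attracting fixed point, and its upper exponent need not be $\le0$ (a periodic $\{0,2\}$-orbit bounded away from $0$ can have positive central exponent, up to $\log\beta'$). The correct observation is simply that such an orbit eventually leaves $H$ and never returns (only $f_1$ can map into $H$), so the derivative is eventually $\le\beta'$ and $\overline\chi\le\log\beta'<\log\beta^-_{02}$.

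Second, slicing at the hitting times of the symbol $1$ produces a bookkeeping mismatch: the $H$-block at the start of excursion $[n_k,n_{k+1})$ is governed by the $H'$-approach at the \emph{end} of the previous excursion $[n_{k-1},n_k)$, so your claimed per-excursion bound does not close up as stated. The paper sidesteps this by slicing instead at the entry times $i_k$ into $H'$ and exit times $e_k$ from $H$, so that each block $[i_k,e_k]$ packages one $H'$-approach together with the $H$-visit it controls (one $f_1$ in between), while the complementary stretch $[e_k+1,i_{k+1}-1]$ lies entirely outside $H$ and is trivially bounded by $\beta'$. With that regrouping your short/long dichotomy using~\eqref{e.derivatives2} and~\eqref{e.derivatives} goes through exactly as in the paper's Claim~\ref{c.square}.

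Your lower-bound argument for $\widetilde\chi>0$ via the periodic words $(0^n1)^\ZZ$ is correct; the paper does not spell this half out.
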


\begin{proof}
Arguing by contradiction, let us assume that for every $\varepsilon>0$ there exist a point $p\in[0,1]$ together with a sequence $\xi=(\ldots\xi_{-1}.\xi_0\xi_1\ldots)\in\Sigma_3$ such that $(p,\xi)\notin\cE$ and that $\overline\chi(p,\xi)>\log\beta^-_{02}-\varepsilon$.

The Lyapunov exponent of a trajectory is the average of $\log\,\lvert f_i'\rvert$ along the trajectory. Hence, if the upper Lyapunov exponent of a trajectory is greater than $\log\beta^-_{02}-\varepsilon$, the trajectory must return to $H$ infinitely many times.
The only way to enter $H$ is by coming from $H'$ after applying $f_1$, and the only way to get into $H'$ is by applying $f_0$.
To reach a contradiction, we only need to prove that the average of $\log |f_i'|$ along the piece of trajectory between
two such consecutive visits to $H$ is not greater than $\log\beta^-_{02}-\varepsilon$.

Note that it is enough to consider the case that $\xi_i\ne 1$ for infinitely many $i\ge 0$ since otherwise we would have $\overline\chi(p,\xi)<0$.
Further, we can freely assume that $p\ne0$ as otherwise we could replace $p$ by some iterate.
Note that
\[
	\beta'
		= \max\{f_0'(x),\lvert f_1'(x)\rvert,f_2'(x)\colon x\notin H\}<\beta^-_{02}.
\]
Thus, we can further assume that the orbit $\{f_{[\xi_0\ldots\,\xi_m]}(p)\}_{m\ge 0}$ hits the interval $H$ infinitely many times. Indeed, otherwise this orbit would be contained in the interval $(\de,1]$ in which the derivatives $f_0'$, $\lvert f_1'\rvert$, and $f_2'$ are bounded from above by $\beta'$ and thus the upper Lyapunov exponent $\overline\chi(p,\xi)$ would be bounded from above by $\log \beta'<\log\beta^-_{02}$.
Hence, without loss of generality, possibly replacing $p$ by some positive iterate, we can assume that $p \in H$ and $f_{\xi_0}(p)\notin H$.

For every $m\ge 0$ we write
$
	p_{m+1}\eqdef f_{[\xi_0\ldots\,\xi_m]}(p).
$	
Note that by our choices in~\eqref{e.H0} the only way of entering in $H$ is by coming from $H'$ after applying $f_1$ and the only way of entering and staying in $H'$ is by applying $f_0$.
\begin{figure}[h]
\vspace{0.6cm}
\begin{minipage}[t]{\linewidth}
\centering
\begin{overpic}[scale=.5,
  ]{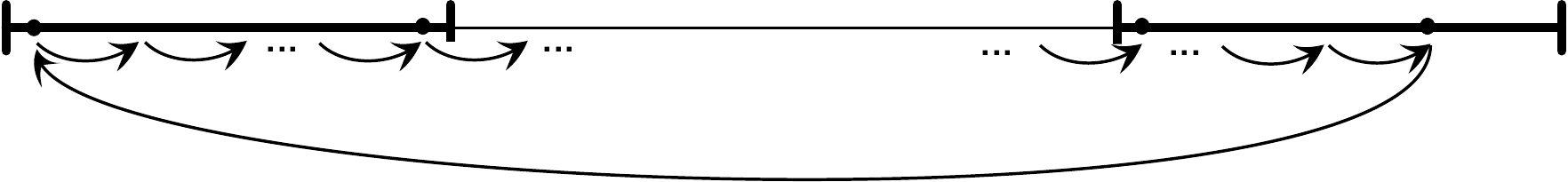}
    \put(-0.5,4){$0$}	
    \put(99,4){$1$}
    \put(1.4,14){$p_{r_k}$}
    \put(26,14){$p_{e_k}$}
    \put(71,14){$p_{i_k}$}
    \put(90,14){$p_{r_k-1}$}
    \put(13,17){$H$}
    \put(82,17){$H'$}
  \end{overpic}
\end{minipage}
\caption{Definition of the sequences $(r_k)_k$, $(e_k)_k$, $(i_k)_k$}
\label{f.seiiq}
\end{figure}

We define three increasing sequences $(r_k)_k$, $(e_k)_k$, and $(i_k)_k$ of positive integers as follows: $i_k< r_k \le e_k<i_{k+1}$,
\begin{equation}\label{e.rkek}
	p_j\in H
	\quad\text{ if and only if }\quad
	r_k\le j \le e_k \text{ for some }k,
\end{equation}
and
\begin{equation}\label{e.ik}
	p_j\in H'\quad\text{ for all }\quad
	i_k\le j\le r_k-1 \text{ for some }k,
\end{equation}
where $i_k$ is the smallest number with this property.
By the above observation $\xi_{r_k-1}=1$.
Condition~\eqref{e.H0} implies that $\xi_j\ne1$ for every index $j\in \{r_k,\ldots,e_k-1\}$ whenever $r_k<e_k$.
By~\eqref{e.H0} we also have $\xi_j=0$ for every index $j\in \{i_k-1,\dots,r_k-2\}$. In particular, writing $H'=[\delta',1]$, the latter implies
\begin{equation}\label{e.zeros}
	\delta'\le p_{i_k} < f_0(\delta').
\end{equation}
Moreover, we have $p_j\notin H$ for all $j\in \{e_k+1, i_{k+1}-1\}$, which implies \begin{equation}
\label{e.estimate1}
 	\log\big\lvert
	\big( f_{[\xi_{e_k+1}\,\ldots \,\xi_{i_{k+1}-1}]} \big)'(p_{e_{k}+1})\big\rvert
	< (\beta')^{i_{k+1}-e_k -1}.
\end{equation}
Let us denote by $N_k$ the number of iterates of the point $p_{i_k}$ staying in $H'$ before entering $H$:
\[
	N_k\eqdef r_k-i_k-1.
\]

\begin{claim}\label{c.alphaa}
	We have
	$\displaystyle p_{r_k}\ge \lambda_0^{N_k+1}\,\delta$.
\end{claim}

\begin{proof}
	By~\eqref{e.zeros} we have 
	\[
		f_0^{N_k}(p_{i_k})
		= p_{r_k-1}
		< f_0^{N_k}(f_0(\delta'))=f_0^{N_k+1}(\delta').
	\]	
	Since $\delta'=f_1^{-1}(\delta)$ and since $f_1$ is affine, we have $1- \delta'=\gamma^{-1}\delta$. Hence, as $f_0'\ge\lambda_0$ and $f_0(1)=1$, we have
	\[
		1- p_{r_k-1} > 1 -  f_0^{N_k+1}(\delta')
		\ge
		\lambda_0^{N_k+1}\,\gamma^{-1}\delta\,.
	\]
	Finally, we have
	\[
		p_{r_k}=f_1(p_{r_k-1})\ge
		\gamma\,\lambda_0^{N_k+1}\gamma^{-1}\delta
		=\lambda_0^{N_k+1}\,\delta,
	\]
	which proves the claim.
\end{proof}

By Claim~\ref{c.alphaa}, $e_k-r_k$ is bounded from above by $\widetilde M_k+1$, where $\widetilde M_k$ is defined by
\[
	(\beta_H)^{\widetilde M_k} \lambda_0^{N_k+1} \, \de
	= \de,
\]	
that is,
\begin{equation}\label{e.utt}
	\widetilde M_k+1\le M_k\eqdef
	\left\lfloor
	\frac{(N_k+1)\, \lvert \log \lambda_0\rvert }
		{\log \beta_H}\right\rfloor+2.
\end{equation}

Let us now estimate the finite-time Lyapunov exponent associated to the sequence $(\xi_{i_k}\dots \xi_{e_k})$.

\begin{claim}\label{c.square}
	We have $\displaystyle
	\frac{\log\big\lvert \big( f_{[\xi_{i_k}\ldots \,\xi_{e_k}]}\big)'(p_{i_k})
 	\big\rvert}{e_k-i_k+1} < \log\beta'$.
\end{claim}

\begin{proof}
First observe that if $e_k-i_k\le L$, with
\[
	\frac{\log\big\lvert \big( f_{[\xi_{i_k}\ldots \,\xi_{e_k}]}\big)'(p_{i_k})
 	\big\rvert}{e_k-i_k+1}
	\le\max_{\ell=1,\ldots,L} \frac{\log((\beta^+_{02})^\ell\,\gamma)}{\ell+1}
	\le \frac{\log((\beta^+_{02})^L\,\gamma)}{L+1}
\]
by our hypothesis~\eqref{e.derivatives2} the claim is automatically satisfied.

To prove the claim in the other case $e_k-i_k>L$, it is enough to assume that the number of iterations in the interval $H$ is the maximum possible (clearly this is the
case which bounds the derivative $(f_{[\xi_{i_k}\dots\, \xi_{e_k}]})'$ from above), that is, let us suppose that $e_k-r_k=M_k$. Then
\[
\frac{\log\big\lvert \big( f_{[\xi_{i_k}\ldots\, \xi_{e_k}]}\big)'(p_{i_k})
	\big\rvert}{e_k-i_k+1}
 \le \frac{M_k\log\beta^+_{02} + \log\gamma - N_k\,\lvert\log \,\lambda'\rvert}
		{M_k+N_k+1}.
\]
From $\log\gamma<0$ and~\eqref{e.utt} we conclude
\[\begin{split}		
&\frac{\log\big\lvert \big( f_{[\xi_{i_k}\ldots\, \xi_{e_k}]}\big)'(p_{i_k})
	\big\rvert}{e_k-i_k+1}\\
&\le \left[
		\frac{(N_k + 1) \, \lvert\log \lambda_0 \rvert}{\log \beta_H}\log \beta^+_{02}
			+2\log\beta^+_{02} - N_k\lvert\log \,\lambda'\rvert
	\right] \frac{1}{M_k+N_k+1}\\
&\le
	\frac{N_k}{M_k+N_k+1}
		\Big(
			\lvert\log \lambda_0 \rvert\,\frac{\log \beta^+_{02}}{\log \beta_H} 	
			- \lvert\log \,\lambda'\rvert
		\Big)		\\
&\phantom{\le}		
	+ \Big( \lvert\log \lambda_0 \rvert
		+ 2\log\beta_H
	   \Big)\frac{\log \beta^+_{02}}{\log \beta_H} \,
	   \frac{1}{M_k+N_k+1}
	   \\
&\le
	\left(
		\lvert\log \lambda_0 \rvert\,\frac{\log \beta^+_{02}}{\log \beta_H}
		- \lvert\log \,\lambda'\rvert
		+ \frac 2 3\log\beta^+_{02}
		\right)
	+ \lvert\log \lambda_0 \rvert \, \frac{\log \beta^+_{02}}{\log \beta_H} \,
		\frac{1}{M_k+N_k+1}.
\end{split}\]
Hence, noting that $M_k+N_k\ge L$ by~\eqref{e.derivatives} and~\eqref{edefL} we  conclude that
\[
\frac{\log\big\lvert \big( f_{[\xi_{i_k}\ldots\, \xi_{e_k}]}\big)'(p_{i_k})
	\big\rvert}{e_k-i_k+1}
< \frac 3 4 \log \beta'
	+ \frac 1 4 \log\beta' =  \log\beta'.
\]
We have proved the claim.
\end{proof}

We are now ready to get an upper bound for $\overline\chi(p,\xi)$. It is
enough to consider segment of orbits corresponding to exit times
$e_k$ and starting at the point $p_1$:
\[
\begin{split}
\frac{ \log\big\lvert \big( f_{[\xi_{1}\ldots \,\xi_{e_k}]} \big)'(p_{1})
	\big\rvert}{e_k}
=&  \sum_{j=0}^{k}  \left( \frac{e_j-i_j+1}{e_k} \right) \,
\frac{ \log\big\lvert \big( f_{[\xi_{i_j}\ldots\, \xi_{e_j}]}\big)'(p_{i_j})
	\big\rvert}{e_j-i_j+1}  \\
& + \left( \frac{i_{j+1}-e_j-1}{e_k} \right)\,
 \frac{
\log\big\lvert \big( f_{[\xi_{e_j+1}\ldots\, \xi_{i_{j+1}-1}]}\big)'(p_{e_{j+1}})
	\big\rvert}{i_{j+1}-e_j-1}.
\end{split}
\]
By equation \eqref{e.estimate1} and Claim~\ref{c.square} we get that
this derivative is bounded from above by $\log\beta'<\log\beta^-_{02}$. This completes  the proof of the proposition.
\end{proof}

\section{Thermodynamical formalism}

In the first part of this section we establish the existence of rich phase transitions using the gap in the Lyapunov spectrum (Theorem~\ref{t.hund}). In the second part we construct a maximal entropy measure with nonpositive central exponent.

\subsection{Co-existence of equilibrium states with positive entropy}\label{s:5}

Due to of the skew product structure and our hypotheses, the splitting in~\eqref{e.splitt} is dominated and for every Lyapunov regular point $R\in \La_F$ coincides with the Oseledec splitting provided by the multiplicative ergodic theorem. In particular, the \emph{Lyapunov exponent associated to the central direction $E^c$} at such a point  $R$ is well-defined and, in fact, is the Birkhoff average of the continuous function $R\mapsto\log\,\lVert dF|_{E^c_R}\rVert$
\[
    \chi_c(R) \eqdef
    \lim_{n\to\infty}\frac{1}{n}\log \,\lVert {dF^n|}_{E^c_R}\rVert
    = \lim_{n\to\infty}\frac 1 n
    	\sum_{k=0}^{n-1}\log \,\lVert {dF|}_{E^c_{F^k(R)}}\rVert.
\]
Observe that given a Lyapunov regular point $R=(r^s,r^u,r)\in\Lambda$ and a sequence $\xi=(\ldots\xi_{-1}.\xi_0\xi_1\ldots)\in\Sigma_3$ given by
$\xi=\varpi(r^s,r^u)$, we have
\[
    \chi_c(R)
    = \lim_{n\to\infty}\frac 1 n \log\,\lvert (f_{[\xi_0\ldots \,\xi_{n-1}]})'(r)\rvert.
\]
Finally note that the remaining exponents are associated to the stable and the unstable directions $E^s$ and $E^u$, respectively, and are uniformly bounded away from zero.

Let us first recall some general facts. We denote by $\cM(A)$ the set of $F$-invariant Borel probability measures supported on a set $A\subset \Lambda$ and by $\cM_{\rm e}(A)$ the subset of ergodic measures. For $\mu\in\cM(\Lambda)$ let
\[
    \chi_c(\mu)\eqdef \int\log\,\lVert dF|_{E^c}\rVert\,d\mu.
\]
Considering the \emph{spectrum of ergodic measures}, based on Proposition~\ref{p.beta}, the following result about the set of all possible central exponents is an immediate consequence.

\begin{coro}\label{c.spectrum}
    $\displaystyle
    	\big\{\chi_c(\mu)\colon\mu\in\cM_{\rm e}(\Lambda)\big\} \subset
	[\log\lambda_0,\log\widetilde\beta\,]\cup[\log\beta^-_{02},\log\beta^+_{02}].
    $
\end{coro}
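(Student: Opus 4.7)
The plan is to transfer the dichotomy of Proposition~\ref{p.beta} on the IFS to ergodic measures on $\Lambda$ via the symbolic semiconjugacy. To any $R = (r^s, r^u, r) \in \Lambda$ I associate the pair $(r, \xi)$ with $\xi = \varpi(r^s, r^u) \in \Sigma_3$; as noted just before the statement of the corollary, for any Lyapunov regular $R$ one has $\chi_c(R) = \chi(r, \xi)$. For an ergodic $\mu \in \cM_{\rm e}(\Lambda)$, Birkhoff's ergodic theorem applied to the continuous potential $R \mapsto \log\,\lVert dF|_{E^c_R}\rVert$ gives $\chi_c(R) = \chi_c(\mu)$ for $\mu$-a.e.\ $R$.

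First I would dispose of the outer bounds, which do not use the gap. Since $f_0'(x), f_2'(x) \le \beta^+_{02}$ by (F012) and $\lvert f_1'(x) \rvert = \gamma < 1 < \beta^+_{02}$, every Birkhoff average is bounded above by $\log \beta^+_{02}$. The lower bound $\chi_c(\mu) \ge \log \lambda_0$ follows symmetrically from the uniform lower bounds on the fibre derivatives ($f_0' \ge \lambda_0$ by (F0), $\lvert f_1'\rvert = \gamma \ge \lambda_0$ by (F1), and the analogous bound for $f_2$).

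The substantive step is to exclude the gap $(\log \widetilde\beta, \log \beta^-_{02})$. Suppose that $\mu$ is ergodic and $\chi_c(\mu) > \log \widetilde\beta$. Then $\mu$-a.e.\ $R$ satisfies $\chi(r, \xi) = \chi_c(\mu) > \log \widetilde\beta$, so Proposition~\ref{p.beta} forces $(r, \xi) \in \cE$ for $\mu$-a.e.\ $R$. A direct inspection of the three subfamilies that make up $\cE$ (using only $f_0(0) = 0 = f_2(0)$, $f_0(1) = 1$, and $f_1(1) = 0$) shows that for every $(r, \xi) \in \cE$ the forward orbit under $F$ eventually satisfies $r_n = 0$ and $\xi^{(n)}_m \in \{0, 2\}$ for all $m \ge 0$. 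Along this tail the central derivative takes only the values $f_0'(0) = \beta_0$ and $f_2'(0) = \beta_2$, so $\chi(r, \xi)$ is a Ces\`aro average of $\log \beta_0$ and $\log \beta_2$ and therefore lies in $[\log \beta^-_{02}, \log \beta^+_{02}]$. This is in fact the conclusion of the lemma preceding Proposition~\ref{p.beta}. Consequently $\chi_c(\mu) \ge \log \beta^-_{02}$, proving the desired inclusion.

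The only mildly delicate point I anticipate is the matching of the symbolic setting of Proposition~\ref{p.beta} (which concerns individual pairs $(p, \xi)$) with the measure-theoretic setting for $F$ on $\Lambda$: one has to check that the preimage of $\cE$ under the semiconjugacy is a measurable subset of $\Lambda$ and that Oseledec regularity ensures $\chi(r, \xi) = \overline{\chi}(r, \xi) = \chi_c(\mu)$ on a full-$\mu$-measure set. Both are routine consequences of the skew-product structure and the multiplicative ergodic theorem, so no serious obstacle is expected.
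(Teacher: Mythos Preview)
Your proof is correct and follows essentially the same route as the paper, which treats the corollary as an immediate consequence of Proposition~\ref{p.beta} and spells out the dichotomy in Remark~\ref{r.spectruum} (support contained in $\Lambda_{02}$ versus generic point outside $\Lambda_{02}$); your argument via $\cE$ and the lemma preceding Proposition~\ref{p.beta} is simply the contrapositive formulation of the same reasoning. The only small caveat is that the lower bound $f_2'\ge\lambda_0$ you invoke for the outer estimate is not explicitly listed among (F0)--(F012), though the paper itself tacitly relies on $\log\lambda_0$ being the global infimum of the central exponent.
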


\begin{rema}\label{r.spectruum}{\rm
	Note that there are two possibilities for an ergodic measure.
	If its support is contained in $\Lambda_{02}$ then its central Lyapunov exponent is contained in $[\log\beta^-_{02},\log\beta^+_{02}]$. Otherwise, any generic point is outside $\Lambda_{02}$ and then by Proposition~\ref{p.beta} has central Lyapunov exponent in $ [\log\lambda_0,\log\widetilde\beta\,]$.
}\end{rema}

Given a continuous potential $\varphi\colon\Lambda\to\RR$, an $F$-invariant Borel probability measure $\nu$ is called an \emph{equilibrium state} of
$\varphi$ with respect to $F|_{\Lambda}$ if
\[
    h_\nu(F)+\int\varphi\,d\nu
    = \max_{\mu\in\cM(\Lambda)}\Big( h_\mu(F)+\int\varphi\,d\mu\Big),
\]
where $h_\mu(F)$ denotes the measure theoretic entropy of $\mu$. Since the central direction is $1$-dimensional such maximizing measure indeed exists by~\cite[Theorem A]{DiaFis:11}. Note that we have the following \emph{variational principle}
\begin{equation}\label{varprinc}
    P_{F|\Lambda}(\varphi) =
    \max_{\mu\in\cM_{\rm e}(\Lambda)}\Big( h_\mu(F)+\int\varphi\,d\mu\Big),
\end{equation}
where $P_{F|\Lambda}(\varphi)$ is the \emph{topological pressure} of $\varphi$ with respect to $F|_\Lambda$ (see~\cite{Wal:81} for the definition and further properties that are used in the following). Denote by $h(F)=h(F|_{\Lambda})=P_{F|\Lambda}(0)$ the \emph{topological entropy} of $F|_\Lambda$. Note that an equilibrium state $\nu$ for the zero potential $\varphi=0$ is a measure of maximal entropy $h_\nu(F)=h(F)$.
It is immediate that the two-legged horseshoe $\Lambda_{02}\subset\Lambda$ defined in~\eqref{e.twoleg} satisfies
\begin{equation}\label{e.twolegent}
	h(F|_{\Lambda_{02}})=\log2.
\end{equation}
As the central direction does not contribute to the entropy, we also have
\begin{equation}\label{e.ipane}
	h(F|_{\Lambda})=\log3
\end{equation}
(compare arguments in~\cite[Section 3]{DiaFis:11} and~\cite{BuzFisSamVas:}).

Let us investigate the following one-parameter family $\varphi_t$ of continuous potentials defined by
\[
    \varphi_t\eqdef -t\log\,\lVert dF|_{E^c}\rVert,
    \quad t\in\RR,
\]
and will denote $P(t)\eqdef P(\varphi_t)$. Note that $t\mapsto P(t)$ is convex (and hence continuous and differentiable on a residual set). One says that $P$
exhibits a \emph{phase transition} at a characteristic parameter $t_c$ if it fails to be real analytic at $t_c$. We say that it has a \emph{first order phase transition} at $t_c$ if it fails to be differentiable at $t_c$.

\begin{rema}\label{r.convexx}{\rm
	Let us recall some basic facts. A number $\alpha\in \RR$ is said to be a \emph{sub-gradient}  at $t$ if $P(t+s)\ge P(t)+s\,\alpha$ for all $s\in \RR$.  Note that any equilibrium state $\mu_t$ of the potential $\varphi_t$ with respect to $F|_\Lambda$ provides a sub-gradient of $s\mapsto P(s)$ at $s=t$ given by  $-\chi_c(\mu_t)$. By definition
    \begin{equation}\label{e.entropyy}
        P(t)-t\chi_c(\mu_t) = h_{\mu_t}(F).
    \end{equation}
In particular, the entropy of $\mu_t$ is the intersection of the tangent line $s\mapsto P(t) -\chi_c(\mu_t)(s-t)$ with the $y$-axis.
Moreover, if $s\mapsto P(s)$ is differentiable at $s=t$ then
    \begin{equation}\label{e.deriva}
    \chi_c(\mu_t)=\alpha(t)\eqdef-P'(t) .
    \end{equation}
In particular, in this case, all equilibrium states  of $\varphi_t$ have the same exponent. In our case, non-differentiability is equivalent to the existence of a parameter $t$ and (at least) two equilibrium states for $\varphi_t$ with different central exponents.
}\end{rema}

A first order phase transition of $P$ is said to be \emph{rich} if there are two  associated equilibrium states with different central exponents and with positive entropy.

The next result establishes the existence of a rich phase transition.

\begin{figure}
\begin{minipage}[c]{\linewidth}
\centering
\begin{overpic}[scale=.4,
  ]{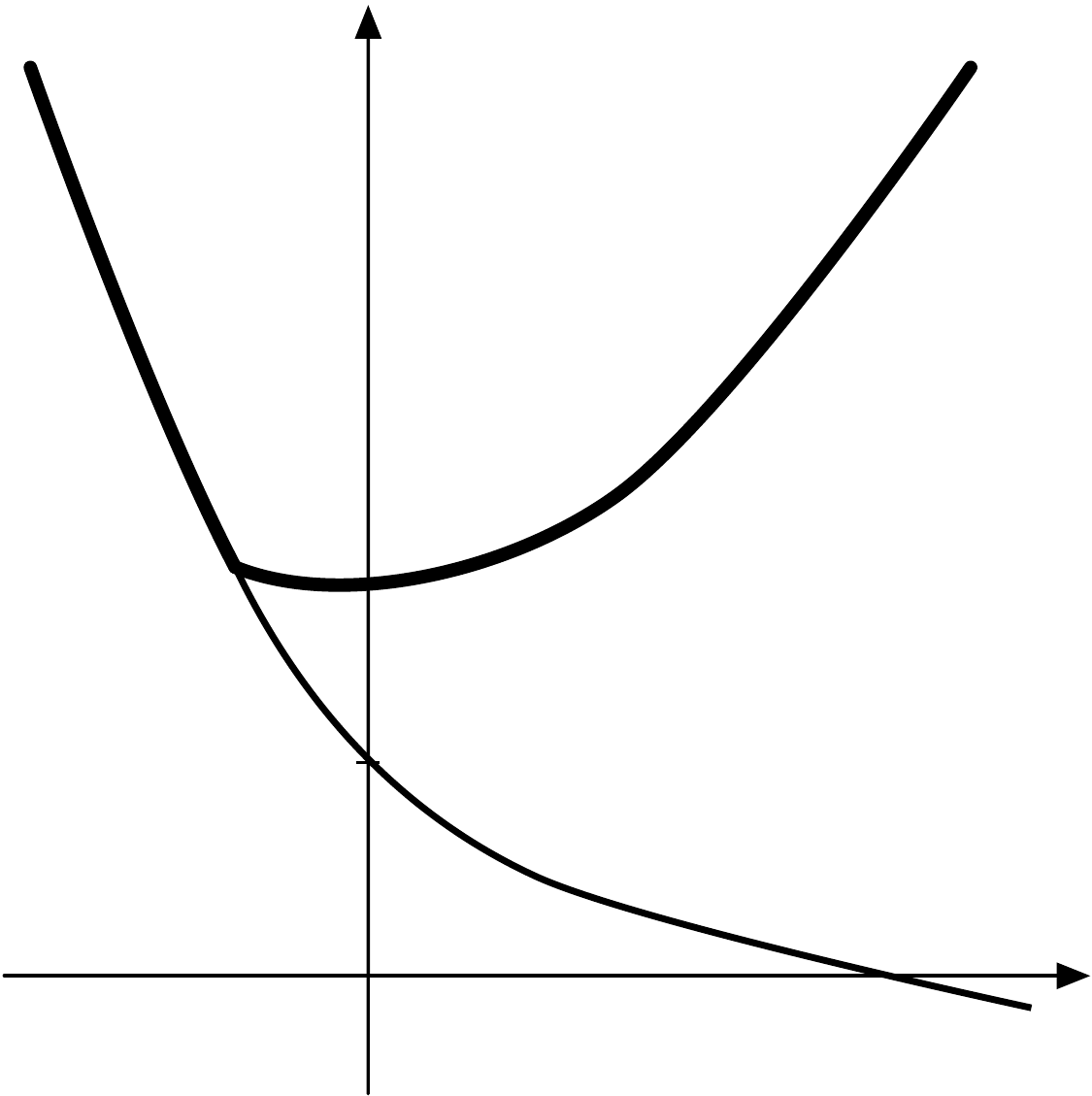}
      	\put(103,7.5){\small$t$}	
	\put(76,70){$P(t)$}	
	\put(59,19){$P_{F|\Lambda_{02}}(\varphi_t)$}	
	\put(35,41){\tiny$\log 3$}	
	\put(35,31){\tiny$\log 2$}	
\end{overpic}
\hspace{0.5cm}
\begin{overpic}[scale=.4,
  ]{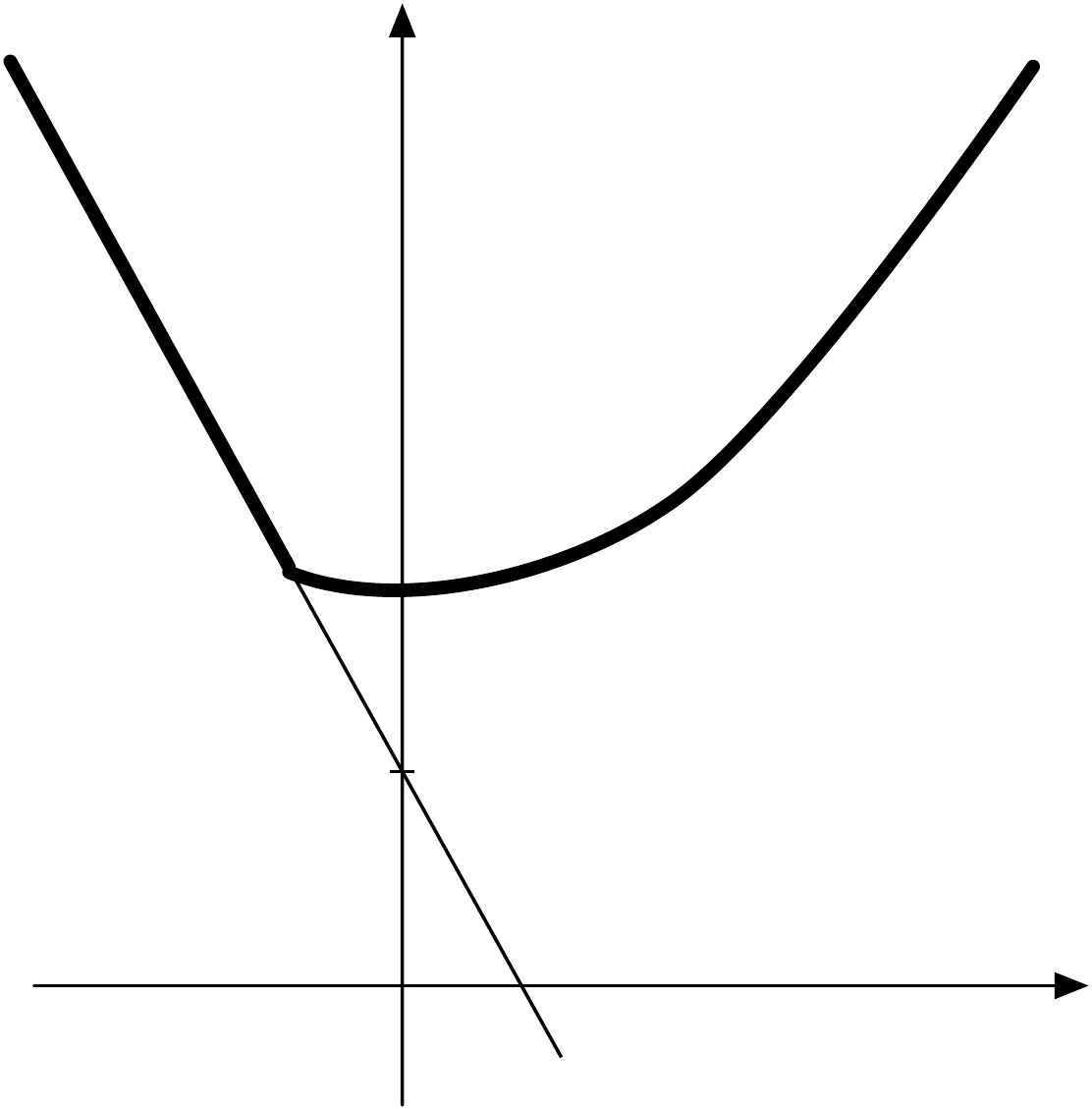}
      	\put(103,7.5){\small$t$}	
	\put(81,70){$P(t)$}	
	\put(46,16){$P_{F|\Lambda_{02}}(\varphi_t)$}	
	\put(38,41){\tiny$\log 3$}	
	\put(38,31){\tiny$\log 2$}	
\end{overpic}
\end{minipage}
\caption{The functions $t\mapsto P(t)$ and $t\mapsto P_{F|\Lambda_{02}}(\varphi_t)$ for $\beta_0\ne\beta_2$ (left) and $\beta_0=\beta_2$ (right).}\label{fig.map2}
\end{figure}

\begin{theo}[Rich phase transition]\label{t.hund}
    There are numbers $\beta_c^-\in[\beta_{02}^-,\beta_{02}^+]$
    and $\beta_c^+\in\big(\lambda_0,\widetilde\beta\,\big)$, $\beta_c^->\beta_c^+$, and a parameter $t_c<0$ such that $P$ is real analytic in $(-\infty,t_c)$ and not differentiable at $t_c$. Moreover it satisfies
    \[
        D^-P(t_c)=-\log\beta_c^-
        \quad\text{ and }\quad
        D^+P(t_c)=-\log\beta_c^+.
    \]
    Further, there exist equilibrium states $\mu^+$ and  $\mu^-$ for $\varphi_{t_c}$ with respect to $F|_{\Lambda}$ that both have positive entropy and central Lyapunov exponents $\log\beta_c^+$ and $\log\beta_c^-$, respectively.
\end{theo}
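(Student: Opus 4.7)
The proof follows the classical recipe for phase transitions arising from a spectral gap. The starting point is the decomposition supplied by Remark~\ref{r.spectruum}: an ergodic measure either lives in the lateral horseshoe $\La_{02}$ with central exponent in $[\log\beta_{02}^-,\log\beta_{02}^+]$, or its generic points lie outside $\La_{02}$ with central exponent in $[\log\la_0,\log\widetilde\beta]$. Introducing
\[
	P_{\rm in}(t)\eqdef P_{F|\La_{02}}(\varphi_t),
\]
the identification of $F|_{\La_{02}}$ with the full $2$-shift and the observation that $\log\|dF|_{E^c}\|$ is locally constant on $\La_{02}$ (with values $\log\beta_0$, $\log\beta_2$) yield $P_{\rm in}(t)=\log(\beta_0^{-t}+\beta_2^{-t})$, real-analytic in $t$, with unique ergodic equilibrium state $\mu_{\rm in,t}$ (Bernoulli, weights $\beta_i^{-t}/(\beta_0^{-t}+\beta_2^{-t})$), of strictly positive entropy and with central exponent $-P_{\rm in}'(t)\in[\log\beta_{02}^-,\log\beta_{02}^+]$.

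Using~\eqref{e.ipane}--\eqref{e.twolegent} to get $P(0)=\log 3>\log 2=P_{\rm in}(0)$ and comparing the $t\to-\infty$ slopes ($\log\beta_{02}^+$ from $P_{\rm in}$ versus at most $\log\widetilde\beta$ from the complement by Corollary~\ref{c.spectrum}), I would set $t_c\eqdef\inf\{t\colon P(t)>P_{\rm in}(t)\}$. The spectral gap $\widetilde\beta<\beta_{02}^-$ ensures $t_c>-\infty$, and openness of $\{P>P_{\rm in}\}$ together with $P(0)>P_{\rm in}(0)$ gives $t_c<0$, $P(t_c)=P_{\rm in}(t_c)$, and $P\equiv P_{\rm in}$ on $(-\infty,t_c)$. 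Hence $P$ is real-analytic on $(-\infty,t_c)$ and $D^-P(t_c)=P_{\rm in}'(t_c)=-\log\beta_c^-$ with $\beta_c^-\in[\beta_{02}^-,\beta_{02}^+]$. I take $\mu^-\eqdef\mu_{\rm in,t_c}$.

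For the right side, I select $t_n\downarrow t_c$ with $P(t_n)>P_{\rm in}(t_n)$ and ergodic equilibrium states $\nu_n$ for $\varphi_{t_n}$ (existence from~\cite[Theorem~A]{DiaFis:11}). Maximality rules out $\nu_n$ being supported in $\La_{02}$, so Remark~\ref{r.spectruum} gives $\chi_c(\nu_n)\in[\log\la_0,\log\widetilde\beta]$. Continuity of $\log\|dF|_{E^c}\|$, continuity of $t\mapsto P(t)$, and upper semi-continuity of $\mu\mapsto h_\mu(F)$ (available from the dominated splitting with one-dimensional center, as used in~\cite{DiaFis:11}) imply that any weak-$\ast$ accumulation point $\mu^+$ of $\{\nu_n\}$ is an equilibrium state of $\varphi_{t_c}$; moreover continuity of the integrand gives $\chi_c(\mu^+)=\lim\chi_c(\nu_n)$, which one identifies with $-D^+P(t_c)=\log\beta_c^+$ because for convex $P$ the right derivative equals $\lim_{t\downarrow t_c}(-\chi_c(\nu_t))$ along differentiability points. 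As $\log\beta_c^+\le\log\widetilde\beta<\log\beta_{02}^-\le\log\beta_c^-$, this establishes the non-differentiability at $t_c$.

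Positivity of entropy follows from $h_{\mu^\pm}=P(t_c)+t_c\log\beta_c^\pm$: with $t_c<0$ and $\beta_c^->\beta_c^+$ one obtains $h_{\mu^+}=h_{\mu^-}+|t_c|(\log\beta_c^--\log\beta_c^+)>h_{\mu^-}$, and the explicit Bernoulli formula gives $h_{\mu^-}>0$ since both Bernoulli weights are strictly positive. The strict bound $\beta_c^+>\la_0$ is ensured because $\chi_c=\log\la_0$ would force every ergodic component of $\mu^+$ to be concentrated on the orbit where $f_0'=\la_0$, i.e., on the fixed point $P_0$, contradicting $h_{\mu^+}>0$; the bound $\beta_c^+<\widetilde\beta$ uses that the supremum defining $\widetilde\beta$ in Proposition~\ref{p.beta} is not attained by any ergodic invariant measure. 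The main obstacle I anticipate is the limiting procedure that delivers $\mu^+$: one must simultaneously invoke upper semi-continuity of entropy in this non-hyperbolic setting and ensure that the limit measure stays external to $\La_{02}$. Here the spectral gap in Corollary~\ref{c.spectrum} is decisive, since without it the left- and right-sided limits could share a common exponent and the phase transition would collapse to a differentiable crossing.
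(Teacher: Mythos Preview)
Your proposal is correct and follows essentially the same route as the paper: define $t_c$ as the parameter where $P$ separates from the real-analytic pressure $P_{F|\Lambda_{02}}$ on the lateral horseshoe, use the spectral gap from Corollary~\ref{c.spectrum} to force $D^-P(t_c)<D^+P(t_c)$, and extract the two equilibrium states from the two sides. Your explicit formula $P_{\rm in}(t)=\log(\beta_0^{-t}+\beta_2^{-t})$ (legitimate since $\log\lVert dF|_{E^c}\rVert$ is locally constant on $\Lambda_{02}$) and the weak-$\ast$ limit construction of $\mu^+$ are somewhat more concrete than the paper's treatment, which argues the non-differentiability at $t_c$ by contradiction and then simply asserts the existence of $\mu^+$; both arrive at the same place.

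One caveat: your justifications of the \emph{strict} inequalities $\beta_c^+>\lambda_0$ and $\beta_c^+<\widetilde\beta$ are not solid. For the lower bound you assume $\lambda_0$ is the global minimum of $\lvert f_i'\rvert$ attained only at $P_0$, but the hypotheses give no lower bound on $f_2'$; for the upper bound, the claim that the supremum defining $\widetilde\beta$ is not attained is neither proved in the paper nor obvious. The paper itself only establishes $\beta_c^+\le\widetilde\beta$ and does not address $\beta_c^+>\lambda_0$, so this looseness is shared with the original.
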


Indeed the number $t_c$ is the largest parameter where the pressure of the lateral horseshoe $F|_{\Lambda_{02}}$ and of the full system $F|_\Lambda$ coincide.

We need the following preliminary result.

\begin{lemm}\label{l.hund}
	We have
	\[
		\lim_{t\to\infty}\frac{P(t)}{t}=-\inf_{\nu\in\cM}\chi(\nu)
		=\lim_{t\to\infty}D^+P(t)
	\]
	and
	\[	
		\lim_{t\to-\infty}\frac{P(t)}{t}
		=-\sup_{\nu\in\cM}\chi(\nu)
		=\lim_{t\to-\infty}D^-P(t)
		.		
	\]
	If $t\mapsto P(t)$ is differentiable on $\RR$ then
	\[
		\{P'(t)\colon t\in\RR\}
		= \{-\chi(\nu)\colon\nu\in\cM(\Lambda)\}.
	\]
\end{lemm}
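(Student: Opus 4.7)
The proof rests on two classical ingredients: the variational principle
\begin{equation*}
P(t)\;=\;\sup_{\mu\in\cM(\Lambda)}\bigl(h_\mu(F)-t\,\chi_c(\mu)\bigr),
\end{equation*}
together with the uniform entropy bound $h_\mu(F)\le h(F)=\log 3$ from \eqref{e.ipane}, and the convexity of $t\mapsto P(t)$, which guarantees that the one-sided derivatives $D^\pm P(t)$ exist everywhere and are non-decreasing. Here $\chi$ denotes the central exponent $\chi_c$; note that $\chi_c$ is continuous on the weak$^\ast$-compact set $\cM(\Lambda)$, so $\inf_\nu\chi_c(\nu)$ and $\sup_\nu\chi_c(\nu)$ are attained.

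\textbf{Limits at $\pm\infty$.} For $t>0$, for any $\mu\in\cM(\Lambda)$ the variational principle gives $P(t)\ge h_\mu(F)-t\chi_c(\mu)$, so $P(t)/t\ge h_\mu(F)/t-\chi_c(\mu)$. On the other hand $P(t)/t\le \log 3/t-\inf_\nu\chi_c(\nu)$. Taking $\mu$ with $\chi_c(\mu)$ arbitrarily close to $\inf\chi_c$ and then letting $t\to+\infty$ pins $\lim P(t)/t=-\inf\chi_c$. The analogous sandwich for $t<0$, using $h_\mu/t\ge \log3/t$ (since $t<0$), yields $\lim_{t\to-\infty}P(t)/t=-\sup\chi_c$. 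For the derivative limits I invoke the standard convex-analysis fact: if $P$ is convex on $\RR$ then $\lim_{t\to+\infty} D^+P(t)=\sup_{t}D^+P(t)=\lim_{t\to+\infty}P(t)/t$, and symmetrically $\lim_{t\to-\infty}D^-P(t)=\inf_tD^-P(t)=\lim_{t\to-\infty}P(t)/t$. This is immediate from $D^+P(t)\le (P(s)-P(t))/(s-t)\le D^-P(s)$ for $t<s$, letting $s\to+\infty$ after dividing by $s$, and the reverse inequality using monotonicity.

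\textbf{Range of $P'$ under differentiability.} Assume $P$ is differentiable on $\RR$. By Remark~\ref{r.convexx}, every equilibrium state $\mu_t$ of $\varphi_t$ (which exists by~\cite[Theorem A]{DiaFis:11}) satisfies $\chi_c(\mu_t)=-P'(t)$, giving the inclusion $\{P'(t):t\in\RR\}\subseteq\{-\chi_c(\nu):\nu\in\cM(\Lambda)\}$. For the reverse inclusion, I would observe first that $\{\chi_c(\nu):\nu\in\cM(\Lambda)\}$ is a \emph{closed interval}: by the ergodic decomposition and linearity of $\nu\mapsto\chi_c(\nu)$, this set is convex, and it is closed/compact because $\cM(\Lambda)$ is weak$^\ast$-compact and $\chi_c$ continuous; its endpoints are $\inf\chi_c$ and $\sup\chi_c$. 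Since $P$ is convex and differentiable everywhere, $P'$ is continuous and non-decreasing, so $P'(\RR)$ is an interval, and by the previous step its closure has endpoints $-\sup\chi_c$ and $-\inf\chi_c$. Combining these two intervals, we obtain the equality (viewed either as equality of intervals sharing the same endpoints from the two previous limit computations, or after checking that each extreme is realized either as $P'(t)$ or as a limit along equilibrium states as $t\to\pm\infty$).

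\textbf{Main obstacle.} The delicate point is the third item: one must ensure that the range $P'(\RR)$ exhausts all central exponents of invariant measures, including at the extremes $\inf\chi_c$ and $\sup\chi_c$, which in principle need not be realized by any equilibrium state at a finite parameter. The resolution is the combination of continuity and monotonicity of $P'$ with parts (i)--(ii), which force the closure of $P'(\RR)$ to contain both extremes; together with the convexity/compactness identification of $\{-\chi_c(\nu):\nu\in\cM\}$ as the full closed interval, this yields the asserted set equality.
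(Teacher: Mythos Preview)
Your proof follows essentially the same approach as the paper's: sandwich $P(t)/t$ via the variational principle and the uniform entropy bound, identify the asymptotic slopes with the limits of the one-sided derivatives by convexity, and for the third item combine the subgradient/equilibrium-state correspondence (one inclusion) with the interval structure of $P'(\RR)$ together with the asymptotic slopes (the other inclusion). You are right to flag the endpoint issue in your ``main obstacle'' paragraph---neither your resolution nor the paper's actually forces $P'(\RR)$ (as opposed to its closure) to contain the extreme values $-\inf_\nu\chi_c(\nu)$ and $-\sup_\nu\chi_c(\nu)$---but this is harmless, since the only application of the lemma in the proof of Theorem~\ref{t.hund} is through the identity $\overline{\{P'(t):t\in\RR\}}=[-\log\beta_{02}^+,-\log\lambda_0]$.
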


\begin{proof}
	Note that $0\le h_\nu(F)$ and the variational principle imply
		\[
		\frac{1}{t}\sup_{\nu\in\cM_{\rm e}}(-t\chi(\nu))
		\le \frac{P(t)}{t}
		\le\frac{h(F|_\Lambda)}{t}+\frac 1 t \sup_{\nu\in\cM_{\rm e}}(-t\chi(\nu))
	\]
	and hence the first equality. By convexity, for any $t>s$
	\[
		P(s)+ \sup_{\tau\ge s}D^+P(\tau)\cdot(t-s)\ge P(t)
		\ge P(s)+D^+P(s)\cdot(t-s)
	\]
	and hence $\lim_{t\to\infty}P(t)/t=\lim_{s\to\infty}D^+P(s)$.
	Similar arguments can be made for the other equalities.	
	
	If $P$ is differentiable, by convexity $\{P'(t)\colon t\in\RR\}$ is an interval $I$.
	For every $t$ any equilibrium state $\nu_t$ for $\varphi_t$ satisfies $-\chi(\nu_t)=P'(t)\in I$. Thus proving the inclusion ``$\subset$''.
	On the other hand, by the first part of the lemma, for every $\nu\in\cM_{\rm e}$, $-\chi(\nu)\in I$, proving ``$\supset$''.
\end{proof}

 \begin{proof}[{Proof of Theorem~\ref{t.hund}}]
We now prove that $P$ is not differentiable everywhere. Arguing by contradiction, suppose that $t\mapsto P(t)$ is differentiable at every point. By Lemma~\ref{l.hund} and our construction
\[
	\overline{\{P'(t)\colon t\in\RR\}}=[-\log\beta^+_{02},-\log\lambda_0].
\]
Thus, in this case, for every $\alpha\in (\lambda_0,\beta^+_{02})$ there would exist $t\in\RR$ with $P'(t)=-\log\alpha$. In particular, there is an equilibrium state $\nu$ of $\varphi_t$ with $\chi_c(\nu)=-\log\alpha$. Recall that any ergodic component of $\nu$ is also an equilibrium state of $\varphi_t$, and hence, with the above, has the same central exponent. But if $\alpha\in(\widetilde\beta,\beta^-_{02})$ this contradicts Corollary~\ref{c.spectrum}. This implies that $P$ is not differentiable at some point.

Any equilibrium state in the two-legged horseshoe $\Lambda_{02}\subset\Lambda$ has exponent within $[\log\beta^-_{02},\log\beta^+_{02}]$. Moreover, as $F|_{\Lambda_{02}}$ is a locally maximal uniformly hyperbolic set, the function $t\mapsto P_{F|\Lambda_{02}}(\varphi_t)$ is real analytic. It is strictly convex
 if and only if $\beta_2\ne\beta_0$. Otherwise, it is linear and equal to $\log2-t\log\beta_0$.
Further, inside the hyperbolic horseshoe $\Lambda_{02}$, for every $\alpha\in(\beta^-_{02},\beta^+_{02})$ there exists a unique equilibrium state $\nu_\alpha$ with respect to $F|_{\Lambda_{02}}$ with exponent $\chi_{\rm c}(\nu_\alpha)=\log\alpha$. Finally note that $\Lambda_{02}\subset\Lambda$ immediately implies $P_{F|\Lambda_{02}}(\varphi_t)\le P(t)$ for all $t$.

\begin{prop}
	There exists $t_c\in\RR$ such that
	\[
		t_c=\sup\{s\colon P(t)=P_{F|\Lambda_{02}}(\varphi_t)\text{ for all }
				t\le s\}
		<0		.
	\]
	In particular, $P$ is real analytic in $(-\infty,t_c)$.
	Moreover, we have
	\[
		D^-P(t_c) < D^+P(t_c).
	\]
\end{prop}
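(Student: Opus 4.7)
The plan is to compare the convex, continuous functions $P(t)$ and $P_{02}(t):=P_{F|\Lambda_{02}}(\varphi_t)$, which satisfy $P\ge P_{02}$ everywhere since $\Lambda_{02}\subset\Lambda$, to locate $t_c$ as the largest parameter where $P$ still coincides with the real-analytic function $P_{02}$, and then to exploit the Lyapunov spectral gap of Proposition~\ref{p.beta} to force the jump $D^-P(t_c)<D^+P(t_c)$. The variational principle yields $P(t)=\max\bigl(P_{02}(t),S(t)\bigr)$, where $S(t):=\sup\{h_\mu(F)-t\chi_c(\mu):\mu\in\cM_{\rm e}(\Lambda)\setminus\cM_{\rm e}(\Lambda_{02})\}$. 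By Remark~\ref{r.spectruum} such $\mu$ satisfies $\chi_c(\mu)\le\log\widetilde\beta$, so for $t<0$, $S(t)\le\log 3-t\log\widetilde\beta$, whereas the Dirac measure at a fixed point of $\Lambda_{02}$ realizing $\beta_{02}^+$ gives $P_{02}(t)\ge-t\log\beta_{02}^+$. Since $\widetilde\beta<\beta_{02}^-\le\beta_{02}^+$, for $t$ sufficiently negative $P_{02}(t)>S(t)$ and hence $P(t)=P_{02}(t)$. Thus $\cS:=\{s:P(\tau)=P_{02}(\tau)\text{ for all }\tau\le s\}$ is non-empty, downward-closed, and bounded above by $0$ (because $P(0)=\log 3>\log 2=P_{02}(0)$). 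By continuity of $P$ and $P_{02}$, $t_c:=\sup\cS$ belongs to $\cS$ and is strictly negative; as $P\equiv P_{02}$ on $(-\infty,t_c]$ and $P_{02}$ is real analytic, so is $P$ on $(-\infty,t_c)$.

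Next I would manufacture an equilibrium state of $\varphi_{t_c}$ whose central exponent lies below the gap. Choose $t_n\searrow t_c$ with $P(t_n)>P_{02}(t_n)$, possible by the defining supremum, and let $\mu_n$ be an equilibrium state of $\varphi_{t_n}$, whose existence is given by~\cite[Theorem A]{DiaFis:11}. If every ergodic component of $\mu_n$ were supported in $\Lambda_{02}$, then $\mu_n\in\cM(\Lambda_{02})$ and $P(t_n)\le P_{02}(t_n)$, a contradiction; hence there is an ergodic component $\nu_n$ with $\nu_n(\Lambda_{02})=0$, itself an equilibrium state of $\varphi_{t_n}$, and by Remark~\ref{r.spectruum} satisfying $\chi_c(\nu_n)\le\log\widetilde\beta$. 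By compactness of $\cM(\Lambda)$ in the weak-$\ast$ topology, a subsequence of $(\nu_n)$ converges to some $\nu^+\in\cM(\Lambda)$. The key claim is that $\nu^+$ is itself an equilibrium state of $\varphi_{t_c}$, and by continuity of $\mu\mapsto\chi_c(\mu)$ we then have $\chi_c(\nu^+)\le\log\widetilde\beta$.

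Granting this, we conclude as follows. On $(-\infty,t_c]$, $P\equiv P_{02}$, so $D^-P(t_c)=P_{02}'(t_c)=-\chi_c(\mu^\ast)$, where $\mu^\ast$ is the unique equilibrium state of $\varphi_{t_c}$ inside the uniformly hyperbolic horseshoe $\Lambda_{02}$; since $\mathrm{supp}\,\mu^\ast\subset\Lambda_{02}$, $\chi_c(\mu^\ast)\ge\log\beta_{02}^-$, whence $D^-P(t_c)\le-\log\beta_{02}^-$. On the other hand, $\nu^+$ being an equilibrium state of $\varphi_{t_c}$ yields by Remark~\ref{r.convexx} the subgradient $-\chi_c(\nu^+)$ at $t_c$, so $D^+P(t_c)\ge-\chi_c(\nu^+)\ge-\log\widetilde\beta$. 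Because $\widetilde\beta<\beta_{02}^-$ we obtain the desired strict inequality $D^-P(t_c)\le-\log\beta_{02}^-<-\log\widetilde\beta\le D^+P(t_c)$.

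The main obstacle is the identification of the weak-$\ast$ accumulation point $\nu^+$ as a bona fide equilibrium state of $\varphi_{t_c}$. This requires upper semi-continuity of the metric entropy on $\cM(\Lambda)$: passing to the limit in $P(t_n)=h_{\nu_n}(F)-t_n\chi_c(\nu_n)$ using continuity of $P$ and of $\chi_c$ gives $P(t_c)\le h_{\nu^+}(F)-t_c\chi_c(\nu^+)$, and the reverse inequality from the variational principle turns this into equality. Upper semi-continuity is available in our skew-product setting thanks to the one-dimensional central bundle, as exploited in~\cite{DiaFis:11}; once it is in place, all remaining pieces are essentially bookkeeping around the spectral gap of Proposition~\ref{p.beta}.
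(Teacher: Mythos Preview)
Your proof is correct and reaches the same conclusion, but the route you take for the non-differentiability at $t_c$ differs from the paper's. The paper argues by contradiction purely through convexity: assuming $P'(t_c)$ exists, it equals $P_{02}'(t_c)\le -\log\beta_{02}^-$; then for each $t>t_c$ close to $t_c$ one picks an ergodic equilibrium state off $\Lambda_{02}$, obtaining $D^+P(t)\ge -\log\widetilde\beta$, and a short convexity estimate (via $\kappa:=\inf_{t>t_c}D^+P(t)$) forces $P'(t_c)\ge -\log\widetilde\beta$, a contradiction. No limit of measures and no upper semi-continuity of entropy are invoked. You instead pass to a weak-$\ast$ accumulation point $\nu^+$ of equilibrium states at $t_n\searrow t_c$ and use upper semi-continuity of entropy (legitimately available from~\cite{DiaFis:11}) to certify that $\nu^+$ is an equilibrium state at $t_c$ with $\chi_c(\nu^+)\le\log\widetilde\beta$, which directly yields a subgradient $\ge-\log\widetilde\beta$ and hence the jump. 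Your approach costs one extra external ingredient (USC of entropy) but has the payoff of already producing the equilibrium state $\mu^+$ that the paper constructs only afterwards in the proof of Theorem~\ref{t.hund}; the paper's argument is more self-contained for this proposition alone. One minor point worth tightening: when you extract an ergodic component $\nu_n$ of $\mu_n$ off $\Lambda_{02}$, note that only $\lambda$-almost every ergodic component is itself an equilibrium state, so you should say that a positive-measure set of components lies off $\Lambda_{02}$ and choose $\nu_n$ among those; this is immediate from your observation that $\mu_n\notin\cM(\Lambda_{02})$.
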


\begin{proof}
 	Note that $t\mapsto P_{F|\Lambda_{02}}(\varphi_t)$ is analytic and all of its derivatives are in the interval $[-\log\beta^+_{02},-\log\beta^-_{02}]$.	By Lemma~\ref{l.hund}, for $t$ small enough the equilibrium state $\mu_t$ for $\varphi_t$ has central Lyapunov exponent arbitrarily close to $\log\beta^+_{02}$ and hence not contained in the interval $[\log\lambda_0,\log\widetilde\beta\,]$. By Remark~\ref{r.spectruum} this state is thus supported in $\Lambda_{02}$. Therefore, $P_{F|\Lambda_{02}}(\varphi_t)=P(t)$ for every $t$ small enough.  Thus, $t_c>-\infty$.
Note that Lemma~\ref{l.hund} implies that $t_c<\infty$.	
To see that $t_c<0$, just note that
	\[
		P(0)=h(F|_\Lambda)=\log3>
		P_{F|\Lambda_{02}}(0)=h(F|_{\Lambda_{02}})=\log2.
	\]	
This proves the first part of the proposition.
	
	Arguing by contradiction, let us assume that $P$ is differentiable at $t_c$,
	\[
		D^-P(t_c) =D^+P(t_c)=P'(t_c)=P'_{F|\Lambda_{02}}(t_c).
	\]	
This implies that $D^+P(t_c)\le -\log\beta^-_{02}$. As for $t>t_c$ and close to $t_c$ we have $P(t)>P_{F|\Lambda_{02}}(\varphi_t)$ there exists an ergodic equilibrium state $\mu_t$ of $\varphi_t$ with respect to $F|_{\Lambda}$. By Remark~\ref{r.spectruum} it has generic points outside $\Lambda_{02}$. Thus $\chi(\mu_t)\le\log\widetilde\beta<\log\beta^-_{02}$ and hence $D^+P(t)\ge-\log\widetilde\beta>-\log\beta^-_{02}$.

Consider $\kappa\eqdef \inf_{t>t_c}D^+P(t)\ge-\log\widetilde\beta$ and observe that for $t>t_c$ and $s\ge t$ we have
	$
		P(s)\ge P(t)+\kappa (s-t)
	$	
and thus $P(s)\ge P(t_c)+\kappa(s-t_c)$. This implies $P'(t_c)\ge \kappa\ge-\log\widetilde\beta$, contradicting that $P'(t_c)\le -\log\beta^-_{02}$. Hence, $P$ is not differentiable at $t_c$.
\end{proof}

Let
\[
	\beta_c^-\eqdef \exp(- D^-P(t_c))
	\quad\text{ and }\quad
	\beta_c^+\eqdef \exp(- D^+P(t_c)).
\]	
With the above, $\beta_c^+\le\widetilde\beta$ and either $\beta_c^-=\beta_2=\beta_0$ or $\beta_c^-\in (\beta^-_{02},\beta^+_{02})$. Let us restrict to the latter case (the other one is similar and simpler).
There exists a unique (hence ergodic) equilibrium state $\mu^-$ of $\varphi_{t_c}$ with respect to $F|_{\Lambda_{02}}$ that has exponent $\beta_c^-$.
On the other hand, there exists an ergodic equilibrium state $\mu^+$ of $\varphi_{t_c}$ with respect to $F|_{\Lambda}$ that has exponent $\beta_c^+$.

\begin{prop}
	$\displaystyle 0<h_{\mu^-}(F)<h_{\mu^+}(F)$.
\end{prop}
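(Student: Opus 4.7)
The plan is to derive a closed formula for $h_{\mu^\pm}(F)$ from the equilibrium state property and then to exploit the explicit form of $\varphi_{t_c}$ on the lateral horseshoe.

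First I would record the following identity. Since $P(t_c)=P_{F|\Lambda_{02}}(\varphi_{t_c})$ and $\mu^-$ is supported on $\Lambda_{02}$, the measure $\mu^-$ is in fact an equilibrium state of $\varphi_{t_c}$ with respect to $F|_{\Lambda}$. Likewise, $\mu^+$ is already an equilibrium state of $\varphi_{t_c}$ with respect to $F|_{\Lambda}$ by construction. For $\mu\in\{\mu^-,\mu^+\}$ one has $\int\varphi_{t_c}\,d\mu=-t_c\chi_c(\mu)$, so
\[
	h_{\mu^\pm}(F)=P(t_c)+t_c\log\beta_c^\pm.
\]

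From this formula, the comparison is immediate:
\[
	h_{\mu^+}(F)-h_{\mu^-}(F)=t_c\bigl(\log\beta_c^+-\log\beta_c^-\bigr).
\]
Since $t_c<0$ and $\beta_c^+<\beta_c^-$ have been established in the theorem, the right-hand side is a product of two strictly negative numbers, and hence strictly positive. This gives $h_{\mu^-}(F)<h_{\mu^+}(F)$.

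It remains to prove $h_{\mu^-}(F)>0$. The key observation is that $\varphi_{t_c}$ restricted to $\Lambda_{02}$ is locally constant. Indeed, every point in $\Lambda_{02}$ has central coordinate $x=0$, and for $R=(\theta,0)\in\CC_i\cap\Lambda_{02}$ with $i\in\{0,2\}$ one has $\lVert dF|_{E^c_R}\rVert=f_i'(0)=\beta_i$, so $\varphi_{t_c}(R)=-t_c\log\beta_i$ depends only on the symbol $\xi_0(\varpi(\theta))$. Under the conjugation of $F|_{\Lambda_{02}}$ with the full two-shift, this corresponds to a potential constant on one-cylinders. By the classical Ruelle--Perron--Frobenius theory for H\"older (in fact, locally constant) potentials on full shifts, the unique equilibrium state is the Bernoulli measure with weights
\[
	p_i=\frac{\beta_i^{-t_c}}{\beta_0^{-t_c}+\beta_2^{-t_c}}\in(0,1),\qquad i\in\{0,2\},
\]
and its entropy equals $-p_0\log p_0-p_2\log p_2>0$. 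Since $\mu^-$ is precisely this unique equilibrium state (it is the unique measure with exponent $\log\beta_c^-$ in $\Lambda_{02}$ as recalled before the proposition), this yields $h_{\mu^-}(F)>0$.

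I do not anticipate a genuine obstacle. The only point requiring attention is that $\mu^-$ really is an equilibrium state for $\varphi_{t_c}$ on the \emph{ambient} system $F|_{\Lambda}$, which is where we apply the formula $h_{\mu^-}(F)=P(t_c)+t_c\log\beta_c^-$; this follows from $P(t_c)=P_{F|\Lambda_{02}}(\varphi_{t_c})$ together with $\operatorname{supp}\mu^-\subseteq\Lambda_{02}$. Once this is noted, both inequalities reduce to the two short computations above.
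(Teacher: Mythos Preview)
Your argument is correct. For the strict inequality $h_{\mu^-}(F)<h_{\mu^+}(F)$ your computation and the paper's tangent-line picture are the same thing said in two languages: the intersection of the supporting line $s\mapsto P(t_c)-\chi_c(\mu^\pm)(s-t_c)$ with the $y$-axis is exactly $P(t_c)+t_c\log\beta_c^\pm$, so both proofs reduce to $t_c<0$ and $\beta_c^+<\beta_c^-$.

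Where you genuinely diverge from the paper is in proving $h_{\mu^-}(F)>0$. The paper argues by contradiction via strict convexity of $t\mapsto P_{F|\Lambda_{02}}(\varphi_t)$: if the tangent at $t_c$ passed through the origin, tangents at $t<t_c$ would hit the $y$-axis below zero, giving a measure with negative entropy. You instead observe that $\varphi_{t_c}|_{\Lambda_{02}}$ is constant on one-cylinders, identify $\mu^-$ explicitly as the Bernoulli measure with weights $p_i\propto\beta_i^{-t_c}$, and read off its positive entropy. Your route is more concrete and has the minor advantage of treating the cases $\beta_0=\beta_2$ and $\beta_0\neq\beta_2$ uniformly (the paper's strict-convexity step only applies when $\beta_0\neq\beta_2$, the affine case being handled separately as ``similar and simpler''). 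The paper's route, on the other hand, would survive unchanged if the central potential on $\Lambda_{02}$ were merely H\"older rather than locally constant.
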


\begin{proof}
	Recall~\eqref{e.entropyy} and observe that $h_{\mu^-}(F)$ is equal to the intersection of the tangent line to the pressure $P_{F|\Lambda_{02}}(\varphi_t)$ at $t=t_c$ with the $y$-axis. If $h_{\mu^-}(F)=0$ then, by strict convexity of $t\mapsto P_{F|\Lambda_{02}}(\varphi_t)$, the corresponding tangent line at $t<t_c$ would intersect the $y$-axis at some negative value providing negative entropy, which is impossible. This shows $h_{\mu^-}(F)>0$.
	
	Finally, to see that $h_{\mu^+}(F)$ is also positive, recall that $t_c<0$ and that the line $s\mapsto P(t_c)-\chi(\mu^+)(s-t_c)$ is above the tangent line to the pressure $P_{F|\Lambda_{02}}(\varphi_t)$ at $t=t_c$. Thus, its intersection with the $y$-axis is above $h_{\mu^-}(F)$ and hence is positive. 	
\end{proof}	
	
This proves the theorem.
\end{proof}

\subsection{Lifts of  Bernoulli measures
}\label{s.52}

In this section we study closer Lyapunov exponents of periodic pairs constructing lifts of Bernoulli measures and, in particular, a measure with maximal entropy. Note that, for a periodic sequence $(\xi_0\ldots\xi_{m-1})^\ZZ\in\Sigma_3$ and a fixed point $p= f_{[\xi_0\ldots\,\xi_{m-1}]}(p_{(\xi_0\ldots\,\xi_{m-1})^\ZZ})$, we have
\begin{equation}\label{e.lyapexpo}
    \chi(p,(\xi_0\ldots\xi_{m-1})^\ZZ)=
    \frac 1 m \log\, \big\lvert (f_{[\xi_0\ldots\,\xi_{m-1}]})'(p)\big\rvert.
\end{equation}

To the skew-product structure there is naturally associated a semiconjugation $\pi\colon\Lambda\to\Sigma_3$ such that $\sigma\circ\pi=\pi\circ F$. Recall also that $\varpi\colon \Gamma\to\Sigma_3$ conjugates the planar three-legs horseshoe and the shift, see Section~\ref{s:2}.
Given a $m$-periodic sequence $\xi=(\xi_0\ldots\xi_{m-1})^\ZZ$ (we do not assume that this period is minimal), consider the fiber $\pi^{-1}(\xi)$, that is, the intersection of $\Lambda$ with the line $\{\varpi^{-1}(\xi)\}\times [0,1]$ (it might be degenerated to a single point). Note that for the periodic sequence $\xi$ we have
\[
   \pi^{-1}(\xi)= \{\varpi^{-1}(\xi)\}\times I_{[\xi]},\quad
   I_{[\xi]} \eqdef \bigcap_{n\ge1} \big(f_{[\xi_0\ldots\,\xi_{m-1}]}\big)^n([0,1]).
\]

\begin{rema}{\rm
Note that for a periodic sequence $\xi$ the set $I_{[\xi]}$ is the maximal invariant set of the homeomorphism $g=f_{[ \xi_0\ldots\,\xi_{m-1} ]}$ on the interval $[0,1]$. If this set is a single point then it is a  topologically attracting fixed point for $g$. If this set is an interval, its endpoints must either be fixed points for $g$ (if $g$ is orientation-preserving) or for $g^2$ (if $g$ is orientation-reversing). Moreover, any endpoint of such an interval that is not an endpoint of the interval $[0,1]$ is (topologically) attracting. The point $1$ is an endpoint if and only if $\xi\ne0^\ZZ$, and in this case it is an attracting point for $g$. The point $0$ is an endpoint if and only if $\xi\in \{0,2\}^\ZZ$, and in this case it is a repelling point for $g$.
}\end{rema}

In view of the above remark we obtain following properties. Let $p_\xi^-$ and $p_\xi^+$ be the left and right endpoints of $I_{[\xi]}$ (they might be equal if $I_{[\xi]}$ is one point). If the symbol $1$ appears an even but nonzero number of times in the finite sequence $(\xi_0\ldots\xi_{m-1})$ then $(\varpi^{-1}(\xi),p_\xi^-)$ and $(\varpi^{-1}(\xi),p_\xi^+)$ are fixed points of $F^m$ and (topologically) attracting with respect to the central dynamics in the fibers. If the symbol $1$ appears an odd number of times, both those points are (topologically) attracting fixed points of $F^{2m}$. If the symbol $1$ does not appear, then $p_\xi^-=0$ and $(\varpi^{-1}(\xi),p_\xi^+)$ is a (topologically) attracting fixed point of $F^m$. Finally note that the central Lyapunov exponent at any periodic point which is topologically attracting in the central direction must be nonpositive.

\begin{prop}\label{p.maxent1}
There exists an ergodic $F$-invariant measure $\mu$ satisfying
\[
	h_\mu(F)=\log 3\quad\text{ and }\quad
	\chi(\mu) \leq 0.
\]	
\end{prop}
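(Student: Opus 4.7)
The plan is to construct $\mu$ as a weak-$*$ accumulation point of invariant measures supported on $F$-orbits of periodic points with topologically attracting fiber coordinates. For each $n\ge 1$ and each word $\omega=(\omega_0\ldots\omega_{n-1})\in\{0,1,2\}^n$, set $q_\omega:=\varpi^{-1}(\omega^\ZZ)\in\Gamma$ and $g_\omega:=f_{[\omega_0\ldots\omega_{n-1}]}\colon[0,1]\to[0,1]$. By Brouwer, $g_\omega$ has a rightmost fixed point $p_\omega^+\in[0,1]$, and by the remark preceding the proposition this point (or the corresponding one for $g_\omega^2$ when $g_\omega$ is orientation-reversing) is topologically attracting in the central direction, so $\lvert g_\omega'(p_\omega^+)\rvert\le 1$.

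Define the $F$-invariant probability measures
\[
\mu_n:=\frac{1}{n\cdot 3^n}\sum_{\omega\in\{0,1,2\}^n}\sum_{k=0}^{n-1}\delta_{F^k(q_\omega,\,p_\omega^+)}.
\]
A direct cylinder count shows that $\pi_*\mu_n$ agrees with the Bernoulli measure $\nu$ on $\Sigma_3$ of weights $(1/3,1/3,1/3)$ on every cylinder of length at most $n$, hence $\pi_*\mu_n\to\nu$ weakly. By compactness of $\cM(\Lambda)$ pass to a subsequential limit $\mu_{n_k}\to\mu$; then $\pi_*\mu=\nu$. Since the fibers of $\pi$ are one-dimensional with $F$ acting injectively on each, the fiber contribution to the entropy vanishes and the Abramov--Rohlin formula yields $h_\mu(F)=h_\nu(\sigma)=\log 3$, matching~\eqref{e.ipane}.

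By the chain rule, $\chi(\mu_n)=\tfrac{1}{n\cdot 3^n}\sum_{\omega}\log\lvert g_\omega'(p_\omega^+)\rvert\le 0$. Since $\log\lVert dF|_{E^c}\rVert$ is continuous on $\Lambda$, $\chi(\mu)=\lim_k\chi(\mu_{n_k})\le 0$. If $\mu$ fails to be ergodic, write its ergodic decomposition $\mu=\int\mu_\alpha\,d\rho(\alpha)$. Since $h_{\mu_\alpha}\le\log 3$ and $\int h_{\mu_\alpha}\,d\rho=h_\mu=\log 3$, one has $h_{\mu_\alpha}=\log 3$ for $\rho$-a.e.~$\alpha$; while $\int\chi(\mu_\alpha)\,d\rho=\chi(\mu)\le 0$ forces $\chi(\mu_\alpha)\le 0$ on a $\rho$-positive set. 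Any such $\mu_\alpha$ is the desired ergodic measure.

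The main technical obstacle is justifying the vanishing of the fiber contribution in the Abramov--Rohlin step: because $F$ acts on each fiber as a single $C^1$ injection rather than a shift, the conditional entropy over the base factor $(\Sigma_3,\sigma)$ is zero, but this uses crucially the one-dimensional/injective structure. Verifying the existence of a topologically attracting fiber fixed point $p_\omega^+$ for every $\omega$ is a mild secondary point, handled via the dichotomy in the remark preceding the proposition.
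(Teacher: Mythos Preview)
Your overall strategy---accumulating invariant measures supported on periodic orbits whose fiber coordinate is topologically attracting, then projecting to the $(1/3,1/3,1/3)$-Bernoulli measure---is exactly the paper's approach, and your handling of the entropy and of ergodicity (via the ergodic decomposition) is correct and in fact more explicit than the paper's. The part you flag as the ``main technical obstacle'' (vanishing of fiber entropy) is not the real problem; that is precisely the content of~\eqref{e.ipane} and the references cited there.

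There is, however, a genuine gap in the step you treat as routine. You define $p_\omega^+$ as the rightmost fixed point of $g_\omega$ and claim $\lvert g_\omega'(p_\omega^+)\rvert\le 1$ for every word $\omega$. When $\omega$ contains an odd number of $1$'s, $g_\omega$ is orientation-reversing and has a \emph{unique} fixed point, and this fixed point can be repelling: the expanding itineraries constructed in Section~\ref{s:7} (Remark~\ref{r.expanding} and Step~\ref{s.expanding}) produce words of the form $(0^{n}\,1\,0^{m})$ whose associated map has an \emph{expanding} fixed point $q_J^\ast$. For such $\omega$ your inequality $\log\lvert g_\omega'(p_\omega^+)\rvert\le 0$ fails, and the bound $\chi(\mu_n)\le 0$ breaks down. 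The remark you invoke only guarantees that the \emph{endpoints of $I_{[\xi]}$}---which for odd parity are a $2$-cycle of $g_\omega$, not fixed points---are topologically attracting for $g_\omega^2$; the interior fixed point of $g_\omega$ is a different object.

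The paper avoids this by summing over $\frac12(\delta_{(\varpi^{-1}(\xi),p_\xi^-)}+\delta_{(\varpi^{-1}(\xi),p_\xi^+)})$ with $p_\xi^\pm$ the endpoints of $I_{[\xi]}$, and by restricting to sequences containing at least one symbol $1$ (so that $p_\xi^-\neq 0$). In the orientation-reversing case these two endpoints are exchanged by $g_\omega$, so the resulting atomic measure is genuinely $F$-invariant and sits on points with nonpositive central exponent. Your construction can be repaired along the same lines: replace the single fixed point $p_\omega^+$ of $g_\omega$ by the pair of endpoints of $I_{[\omega^\ZZ]}$ (equivalently, average over the full $F$-orbit, of period $n$ or $2n$), and either discard words in $\{0,2\}^n$ or keep only the right endpoint for those.
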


\begin{proof}
Denote by $S_m$ the set of all sequences of period $m$ that contain at least a symbol $1$. For $\xi\in S_m$ define
\[
	\mu_m \eqdef
	\frac 1 {\card S_m} \,\sum_{\xi\in S_m} 
		\frac 1 2 (\delta_{(\varpi^{-1}(\xi),p_\xi^-)} + \delta_{(\varpi^{-1}(\xi),p_\xi^+)})
\]
and let $\mu$ be a weak accumulation of $\mu_m$. Then $\mu$ is $F$-invariant and satisfies $\chi(\mu)\ge0$ by the above considerations.  To calculate the entropy of $\mu$ consider the projection $\nu_m=\pi^\ast\mu_m$ of $\mu_m$ to $\Sigma_3$. The measure $\nu_m$ is equally distributed on all sequences which are $m$-periodic with respect to $\sigma$ and which do not contain a symbol $1$. The proportion of these sequences in the set of all $m$-periodic ones is $1-(2/3)^m$, that is, it converges to $1$ as $m\to\infty$. Hence, the sequence $\nu_m$ converges weakly to the $(1/3, 1/3, 1/3)$-Bernoulli measure that has entropy $\log 3$. This Bernoulli measure is the projection of $\mu$. Hence, together with~\eqref{e.ipane} we can conclude that $h_\mu(F)=\log3$. 
\end{proof}

The following fact is a consequence of Proposition~\ref{p.maxent1} and Remark~\ref{r.convexx}.

\begin{coro}
	We have $\displaystyle D^+P(0)\le0$.
\end{coro}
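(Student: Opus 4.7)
The plan is to apply the sub-gradient framework of Remark~\ref{r.convexx} to the maximal-entropy measure $\mu$ produced by Proposition~\ref{p.maxent1}. The first step is to recognise $\mu$ as an equilibrium state at $t=0$: since $\int\varphi_0\,d\mu=0$ and $h_\mu(F)=\log 3$, while by~\eqref{e.ipane} the pressure satisfies $P(0)=h(F|_\Lambda)=\log 3$, the measure $\mu$ attains the supremum in the variational principle~\eqref{varprinc} at $t=0$.

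By Remark~\ref{r.convexx}, the real number $-\chi_c(\mu)$ is then a sub-gradient of the convex map $s\mapsto P(s)$ at $s=0$, i.e.
\[
P(s)\ge P(0)+s\bigl(-\chi_c(\mu)\bigr)\quad\text{for every }s\in\RR,
\]
equivalently $-\chi_c(\mu)\in[D^-P(0),D^+P(0)]$. Substituting the bound $\chi_c(\mu)\le 0$ from Proposition~\ref{p.maxent1} into this inclusion produces the one-sided derivative bound asserted by the corollary.

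The argument carries no additional dynamical content; it is a purely convex-analytic deduction occupying only a few lines, with Proposition~\ref{p.maxent1} being the only real input. The one delicate point I would address carefully at the write-up stage is the orientation of the inequality. Since $-\chi_c(\mu)\ge 0$, the natural bound extracted from the sub-differential inclusion is $D^+P(0)\ge -\chi_c(\mu)\ge 0$, which is the \emph{reverse} of the printed conclusion $D^+P(0)\le 0$; I suspect this is a sign typo that can be reconciled either by reading the sign of $\chi_c(\mu)$ in Proposition~\ref{p.maxent1} in the opposite form (its internal proof in fact writes ``$\chi(\mu)\ge 0$''), or by replacing $D^+P(0)$ by $D^-P(0)$ in the corollary. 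In either reconciled form the sub-gradient argument delivers the statement directly, and I do not see an alternative route that avoids this reconciliation.
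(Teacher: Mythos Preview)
Your approach is exactly the paper's: the corollary is stated as an immediate consequence of Proposition~\ref{p.maxent1} and Remark~\ref{r.convexx}, with no further argument given, and you have correctly unpacked what that deduction amounts to.

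You are also right about the sign problem, and this is not a gap in your reasoning but a genuine typo in the paper. From Proposition~\ref{p.maxent1} (statement: $\chi_c(\mu)\le 0$; the ``$\chi(\mu)\ge 0$'' inside its proof is itself a misprint, since the periodic points used are centrally attracting), the measure $\mu$ is a maximal-entropy measure and hence an equilibrium state at $t=0$. Remark~\ref{r.convexx} then gives that $-\chi_c(\mu)\in[D^-P(0),D^+P(0)]$, and with $\chi_c(\mu)\le 0$ this yields
\[
D^+P(0)\;\ge\;-\chi_c(\mu)\;\ge\;0,
\]
i.e.\ the opposite inequality to the one printed. That the printed direction cannot be what is meant is confirmed by Corollary~\ref{c.entuni}: in the parameter range $\gamma(\beta_{02}^+)^2<1$ the maximal-entropy measure is unique with strictly negative central exponent, so $P$ is differentiable at $0$ with $P'(0)=-\chi_c(\mu)>0$, directly contradicting $D^+P(0)\le 0$. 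The intended statement is therefore $D^+P(0)\ge 0$, and your sub-gradient argument proves exactly that.
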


An analogous construction can be done over $\{0,2\}^\ZZ$.

\begin{prop}
	There exist ergodic $F$-invariant measures $\mu_1$, $\mu_2$ with
	\[
		h_{\mu_1}(F)=h_{\mu_2}(F)=\log2\quad\text{ and }\quad
		\chi(\mu_1)\le0<\chi(\mu_2).
	\]
	Moreover, $\pi^\ast\mu_1=\pi^\ast\mu_2$ is the $(\frac12,0,\frac12)$-Bernoulli measure.
\end{prop}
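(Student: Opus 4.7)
The plan splits naturally into two constructions. For $\mu_2$, the lateral horseshoe $\Lambda_{02}$ is conjugate via $\xi\mapsto(\varpi^{-1}(\xi),0)$ to the full shift on $\{0,2\}^{\ZZ}$, so I would simply lift the $(\tfrac12,0,\tfrac12)$-Bernoulli measure through this conjugation. Ergodicity, the identity $\pi^\ast\mu_2=(\tfrac12,0,\tfrac12)$-Bernoulli, and $h_{\mu_2}(F)=\log 2$ are immediate. Since the central derivative at $(\varpi^{-1}(\xi),0)$ equals $f_{\xi_0}'(0)=\beta_{\xi_0}\in\{\beta_0,\beta_2\}$, integrating yields $\chi(\mu_2)=\tfrac12\log\beta_0+\tfrac12\log\beta_2>0$.

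For $\mu_1$ I would adapt the periodic-orbit construction in the proof of Proposition~\ref{p.maxent1}, but working over $\{0,2\}^{\ZZ}$ and placing the atoms at the \emph{upper} endpoints $p_\xi^+$ of the fibers $I_{[\xi]}$. Fix $m$ and let $T_m$ be the set of all $2^m$ sequences $\xi=(\xi_0\ldots\xi_{m-1})^{\ZZ}\in\{0,2\}^{\ZZ}$ of period $m$. For each $\xi\in T_m$ the composition $g_\xi=f_{[\xi_0\ldots\xi_{m-1}]}$ is an increasing homeomorphism of $[0,1]$ with $g_\xi(0)=0$ and $g_\xi'(0)=\prod\beta_{\xi_i}>1$, so $0$ is repelling and, by the Remark preceding Proposition~\ref{p.maxent1}, there is a unique topologically attracting fixed point $p_\xi^+\in(0,1]$ with $(g_\xi)'(p_\xi^+)\le 1$. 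I set
\[
  \mu_m\eqdef \frac{1}{2^m}\sum_{\xi\in T_m}\delta_{(\varpi^{-1}(\xi),p_\xi^+)}.
\]
Uniqueness of the attractor in $(0,1]$ gives $f_{\xi_0}(p_\xi^+)=p_{\sigma\xi}^+$, so $F$ cyclically permutes the atoms and $\mu_m$ is $F$-invariant. Let $\mu_1^\ast$ be any weak-$\ast$ accumulation point of $(\mu_m)$; it is $F$-invariant.

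I would then verify the three asserted properties for $\mu_1^\ast$ and finally extract an ergodic component. The projection $\pi^\ast\mu_m$ is uniform on the $m$-periodic sequences of $\{0,2\}^{\ZZ}$ and therefore converges weakly to the $(\tfrac12,0,\tfrac12)$-Bernoulli measure, which is ergodic of entropy $\log 2$; hence $\pi^\ast\mu_1^\ast$ is this Bernoulli and, invoking that one-dimensional central fibers contribute no entropy (the skew-product entropy argument used for~\eqref{e.ipane}, compare~\cite{DiaFis:11,BuzFisSamVas:}), $h_{\mu_1^\ast}(F)=\log 2$. Topological attractivity of $p_\xi^+$ together with $F^m$-periodicity yields $\int\log\lVert dF^m|_{E^c}\rVert\,d\mu_m=\frac{1}{2^m}\sum_{\xi\in T_m}\log(g_\xi)'(p_\xi^+)\le 0$, so $\chi(\mu_m)\le 0$ for every $m$ and $\chi(\mu_1^\ast)\le 0$ in the limit.

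It remains to pass to an \emph{ergodic} measure. Since the Bernoulli measure is extremal in the set of $\sigma$-invariant measures, any ergodic decomposition $\mu_1^\ast=\int\nu_\omega\,d\rho(\omega)$ must satisfy $\pi^\ast\nu_\omega=\pi^\ast\mu_1^\ast$ for $\rho$-a.e.~$\omega$; hence each $\nu_\omega$ has $h_{\nu_\omega}(F)=\log 2$ by the same entropy-preservation. Since $\int\chi(\nu_\omega)\,d\rho(\omega)=\chi(\mu_1^\ast)\le 0$, a positive-measure set of components satisfies $\chi(\nu_\omega)\le 0$, and I pick any such component as $\mu_1$. The step I anticipate as the most delicate is this last extraction, specifically transferring the entropy identity $h_\mu(F)=h_{\pi^\ast\mu}(\sigma)$ from $\mu_1^\ast$ to almost every ergodic component; I would handle it by the very same skew-product entropy argument underlying~\eqref{e.ipane}, applied fibrewise.
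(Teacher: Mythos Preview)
Your approach is essentially the same as the paper's: both build $\mu_1$ as a weak limit of uniform averages of Dirac masses at $(\varpi^{-1}(\xi),p_\xi^+)$ over $m$-periodic $\xi\in\{0,2\}^\ZZ$, and both compute $\chi(\mu_2)=\tfrac12\log(\beta_0\beta_2)>0$. The only differences are cosmetic improvements on your side: you obtain $\mu_2$ directly by lifting the Bernoulli measure through the conjugacy $\xi\mapsto(\varpi^{-1}(\xi),0)$ rather than via a second periodic-orbit limit (the paper's limit converges to the same measure anyway), and you explicitly extract an ergodic component of $\mu_1^\ast$ using extremality of the Bernoulli projection, a point the paper's short proof leaves implicit.
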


\begin{proof}
	We will modify the construction in the proof of Proposition~\ref{p.maxent1}.  Consider the set of all $m$-periodic sequences $T_m\subset\{0,2\}^\ZZ$ and define
\[
	\mu_m^1\eqdef
	\frac{1}{\card T_m}\sum_{\xi\in T_m}\delta_{(\varpi^{-1}(\xi),p^+_\xi)}\quad
	\text{ and }\quad
	\mu_m^2\eqdef
	\frac{1}{\card T_m}\sum_{\xi\in T_m}\delta_{(\varpi^{-1}(\xi),0)}.
\]	
and let $\mu_1$ and $\mu_2$ be a weak accumulation of $\mu_m^1$ and $\mu_m^2$, respectively. By construction, $\chi(\mu_1)\le0$ follows as in the proof above. Further, by construction $\mu_2$ is supported on $\Lambda_{02}$, projects to the $(\frac12,\frac12)$-Bernoulli measure in $\{0,2\}^\ZZ$ and hence $h_{\mu_2}(F)=\log2$ and $\chi(\mu_2)=\frac12\log(\beta_0\beta_2)>0$.
As $\mu_1$ and $\mu_2$ have identical projections, they carry the same entropy.
\end{proof}

The above construction can be performed for more general Bernoulli measures. In particular, we obtain the following result.

\begin{prop}
	Any Bernoulli measure on $\Sigma_3$ has a lift to an $F$-invariant measure  with nonpositive central Lyapunov exponent.  
	
	Given a $(p_0,p_1,p_2)$-Bernoulli measure $\nu$ on $\Sigma_3$, then any of its lift to an $F$-invariant measure $\mu$ has central Lyapunov exponent bounded by
\[
	\chi(\mu)\le p_1\log\gamma+(p_0+p_2)\log\beta_{02}^+.
\]	
Moreover, if this bound is negative, then for $\nu$-almost every $\xi$ the fiber $I_{[\xi]}$ is trivial and hence the lifted measure is unique.
\end{prop}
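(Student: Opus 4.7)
The plan is to treat the three claims in order: use a Bernoulli-weighted variant of the periodic-orbit construction from Proposition~\ref{p.maxent1} for existence, integrate directly against $\nu$ for the bound, and then invoke Birkhoff's ergodic theorem to force exponential contraction of backward fibers for uniqueness.

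For the existence of a lift $\mu$ with $\chi(\mu)\le 0$, I would mimic the proof of Proposition~\ref{p.maxent1} replacing uniform weights by Bernoulli weights. For each $m\ge1$ and each word $\xi=(\xi_0\ldots\xi_{m-1})\in\{0,1,2\}^m$, let $\xi^{\rm per}$ denote its $m$-periodic extension in $\Sigma_3$, and consider the $F$-invariant measure equidistributed on the $F$-orbit of the fiber endpoint $(\varpi^{-1}(\xi^{\rm per}),p_{\xi^{\rm per}}^+)$. Weight this measure by $w_\xi\eqdef p_{\xi_0}\cdots p_{\xi_{m-1}}$ and sum over $\xi$ to define $\mu_m$. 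By the discussion of topological attraction of fiber endpoints in Section~\ref{s.52} (here $p_{\xi^{\rm per}}^+$ is always attracting for $f_{[\xi_0\ldots\xi_{m-1}]}$ or its square), the orbit central Lyapunov exponent is nonpositive, giving $\chi(\mu_m)\le0$. Next, $\pi^*\mu_m$ is the natural Bernoulli-weighted counting measure on $m$-periodic sequences, which converges weakly to $\nu$ as $m\to\infty$ by a standard cylinder computation. Since $R\mapsto\log\lVert dF|_{E^c_R}\rVert$ is continuous on $\Lambda$, the functional $\mu\mapsto\chi(\mu)$ is weakly continuous; any weak accumulation $\mu$ of $(\mu_m)_m$ is therefore an $F$-invariant lift of $\nu$ with $\chi(\mu)\le0$.

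The bound reduces to integration. For any $F$-invariant lift $\mu$ of a $(p_0,p_1,p_2)$-Bernoulli measure $\nu$,
\[
\chi(\mu)=\int\log\lvert f_{\xi_0}'(r)\rvert\,d\mu(\xi,r).
\]
Condition (F012) gives $f_0'(r),f_2'(r)\le\beta_{02}^+$ for $r\in[0,1]$, while $f_1$ is affine with $\lvert f_1'\rvert\equiv\gamma$. Splitting the integral according to $\xi_0\in\{0,1,2\}$ and using $\pi^*\mu=\nu$ yields the stated bound $\chi(\mu)\le p_1\log\gamma+(p_0+p_2)\log\beta_{02}^+$.

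For uniqueness when the bound is negative, I would exploit exponential contraction of backward fibers. For any $\xi\in\Sigma_3$ the central fiber of $\Lambda$ above $\varpi^{-1}(\xi)$ is
\[
I_{[\xi]}=\bigcap_{k\ge1}\bigl(f_{\xi_{-1}}\circ\cdots\circ f_{\xi_{-k}}\bigr)([0,1]),
\]
so $\diam(I_{[\xi]})\le\prod_{j=1}^{k}\lVert f_{\xi_{-j}}'\rVert_\infty$. Applying Birkhoff to the shift $(\Sigma_3,\sigma,\nu)$ with the test function $\eta\mapsto\log\lVert f_{\eta_0}'\rVert_\infty$ gives, for $\nu$-a.e.~$\xi$,
\[
\tfrac1k\sum_{j=1}^k\log\lVert f_{\xi_{-j}}'\rVert_\infty\longrightarrow p_1\log\gamma+(p_0+p_2)\log\beta_{02}^+.
\]
When this limit is negative, $\diam(I_{[\xi]})=0$ for $\nu$-a.e.~$\xi$, so each such fiber is a single point; the disintegration of any lift along $\pi$ is then forced, giving uniqueness. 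The main subtle point will be the weak-convergence step $\pi^*\mu_m\to\nu$ together with the passage $\chi(\mu_m)\to\chi(\mu)$ through weak limits; the former is a routine cylinder computation, while the latter relies crucially on continuity of the central expansion along $\Lambda$, which is built into our setup.
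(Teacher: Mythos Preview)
Your approach matches what the paper intends: it gives no proof, merely stating that ``the above construction can be performed for more general Bernoulli measures,'' and your Bernoulli-weighted version of the periodic-orbit construction from Proposition~\ref{p.maxent1}, together with direct integration for the bound and a Birkhoff argument for fiber triviality, is exactly the natural fleshing-out.

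One small correction in your uniqueness step: the Birkhoff limit of $\tfrac{1}{k}\sum_{j=1}^{k}\log\lVert f_{\xi_{-j}}'\rVert_\infty$ equals $p_0\log\lVert f_0'\rVert_\infty + p_1\log\gamma + p_2\log\lVert f_2'\rVert_\infty$, not the bound $p_1\log\gamma+(p_0+p_2)\log\beta_{02}^+$ itself. However, since (F012) gives $\lVert f_0'\rVert_\infty,\lVert f_2'\rVert_\infty\le\beta_{02}^+$, the limit is \emph{bounded above} by $p_1\log\gamma+(p_0+p_2)\log\beta_{02}^+$, so negativity of the latter still forces $\diam(I_{[\xi]})=0$ for $\nu$-a.e.\ $\xi$ and your conclusion stands. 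It is cleanest to apply Birkhoff directly to the locally constant function $g(\eta)=\log\beta_{02}^+$ for $\eta_0\in\{0,2\}$ and $g(\eta)=\log\gamma$ for $\eta_0=1$, which majorizes $\log\lVert f_{\eta_0}'\rVert_\infty$ and has exactly the stated integral.
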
	

\begin{coro}\label{c.entuni}
	If $\gamma\,(\beta_{02}^+)^{2}<1$, then the measure of maximal entropy is unique and has negative central Lyapunov exponent.
\end{coro}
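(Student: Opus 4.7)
The plan is to reduce the corollary to the preceding proposition applied to the equidistributed Bernoulli measure on $\Sigma_3$.

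First, I would show that any measure of maximal entropy $\mu$ for $F|_\Lambda$ projects under the semiconjugation $\pi\colon\Lambda\to\Sigma_3$ to the $(\tfrac13,\tfrac13,\tfrac13)$-Bernoulli measure $\nu$. Indeed, by~\eqref{e.ipane} we have $h_\mu(F)=\log 3$. The observation invoked before~\eqref{e.ipane}, namely that the central direction does not contribute to entropy (cf.~\cite{DiaFis:11}), gives $h_{\pi^\ast\mu}(\sigma)=h_\mu(F)=\log 3$. Since the full shift $\sigma\colon\Sigma_3\to\Sigma_3$ has topological entropy $\log 3$ with a unique measure of maximal entropy, namely $\nu$, we conclude that $\pi^\ast\mu=\nu$.

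Second, I would apply the preceding proposition to $\nu$. For any $F$-invariant lift $\mu$ of $\nu$, the bound stated there specializes to
\[
\chi(\mu)\le\tfrac13\log\gamma+\tfrac23\log\beta_{02}^+=\tfrac13\log\bigl(\gamma(\beta_{02}^+)^2\bigr),
\]
which is strictly negative under the hypothesis $\gamma(\beta_{02}^+)^2<1$. The uniqueness clause of that proposition then forces the fiber $I_{[\xi]}$ to be a singleton for $\nu$-almost every $\xi$, so the $F$-invariant lift of $\nu$ is unique.

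Combining the two steps, every measure of maximal entropy on $\Lambda$ lies over $\nu$, and over $\nu$ there is only one $F$-invariant lift; hence the measure of maximal entropy of $F|_\Lambda$ is unique and, by the displayed bound, has strictly negative central Lyapunov exponent, as required. The only delicate point in this outline is the identity $h_\mu(F)=h_{\pi^\ast\mu}(\sigma)$ for every $F$-invariant $\mu$; this is a Ledrappier--Walters--type consequence of the skew-product structure together with the one-dimensionality and uniform boundedness of the fiber derivatives, and is the same fact already used to establish~\eqref{e.ipane}.
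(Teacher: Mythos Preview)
Your proof is correct and follows exactly the derivation the paper intends: the corollary is stated without proof immediately after the proposition on lifts of Bernoulli measures, and your two steps---projecting any maximal-entropy measure to the uniform Bernoulli via the entropy-preservation fact behind~\eqref{e.ipane}, then invoking the bound and uniqueness clause of that proposition with $p_0=p_1=p_2=\tfrac13$---are precisely how the corollary is meant to fall out. Your flagging of $h_\mu(F)=h_{\pi^\ast\mu}(\sigma)$ as the one nontrivial ingredient is apt, and your justification (zero fiber entropy from one-dimensional injective fiber maps, as in~\cite{DiaFis:11,BuzFisSamVas:}) matches the paper's own appeal to those references.
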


\section{More general potentials}\label{s:6}

Let mention how the observations in Sections~\ref{s:4} and~\ref{s:5} can be extended to more general potentials than the one $\varphi=-\log\,\lVert dF|_{E^c}\rVert$. Given a continuous function $\phi\colon \Lambda\to\RR$ and  a point $R\in\Lambda$, let us consider the \emph{(forward) Birkhoff average} of $\phi$ at $R$ defined by
    \[
        \chi_\phi(R)\eqdef \lim_{n\to\infty}\frac1n\sum_{n=0}^{n-1}\phi\big(F^n(R)\big),
    \]
whenever this limit exists. Indeed, having a gap in the spectrum of such averages  is open in the space of continuous potentials with the supremum topology.

Note that condition (F012) gives constraints to the variation of the potential $\varphi$ on the lateral horseshoe $\Lambda_{02}$ and to the value of $\varphi(P_0)$.
If we assume that a condition similar to (F012) is satisfied for the potential $\phi$ then as in Sections~\ref{s:4}  and~\ref{s:5} we get a gap in the spectrum of values $\chi_\phi$.  Roughly speaking, we need to require that the range $[\alpha^-_{02},\alpha^+_{02}]$ of the potential $\phi|_{\Lambda_{02}}$ should be sufficiently small on $\Lambda_{02}$ and that $\sup\{\phi(R)\colon R=(\theta,1)\in\Lambda\}$ is sufficiently smaller than $\alpha^-_{02}$.

Using these properties,
we can show that there exists numbers $\widetilde\alpha<\alpha^-_{02}\le\alpha^+_{02}$ such that $\chi_\phi(R)\in[\alpha^-_{02},\alpha^+_{02}]$ for every $R\in\Lambda_{02}$ and $\chi_\phi(R)\le\widetilde\alpha$ whenever $R\in\Lambda\setminus\Lambda_{02}$.
We refrain ourselves from stating the precise
conditions for the potentials and giving
all details of the proof of this fact.
Then a statement analogous to Corollary~\ref{c.spectrum} can be obtained saying that for every ergodic measure $\mu\in\cM_{\rm e}(\Lambda)$ and the average
\[
    \chi_\phi(\mu)\eqdef \int\phi\,d\mu
\]
we have that
\[
    \{\chi_\phi(\mu)\colon\mu\in\cM_{\rm e}(\Lambda)\}\subset
    [\min\phi,\widetilde\alpha\,]\cup[\alpha^-_{02},\alpha^+_{02}].
\]
If, moreover, $\phi|_{\Lambda_{02}}$ is H\"older continuous, then it is an immediate consequence that there exists a parameter $t_c\in\RR$ such that $t\mapsto P(t\phi)$ is, in fact, real analytic on $(-\infty,t_c)$.

Finally,
let us formulate a general sufficient condition for a phase transition. It is an immediate consequence of the fact that every equilibrium state is a subgradient of the pressure function and of the gap in the range of averages with respect to ergodic measures  supported on the lateral horseshoe $\Lambda_{02}$ and on $\Lambda\setminus\Lambda_{02}$, respectively.

\begin{prop}
    Given a continuous potential $\phi\colon \Lambda\to \RR$. If
    \[
        \inf_{R\in\Lambda_{02}}\phi(R)>
        \sup_{\mu\in\cM_{\rm e}(\Lambda)\setminus\cM_{\rm e}(\Lambda_{02})}\chi_\phi(\mu)
    \]
    then there exists $t_c\in \RR$ such that $t\mapsto P(t\phi)$ is continuous in $\RR$ and not differentiable in $t_c$.
\end{prop}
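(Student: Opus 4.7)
The plan is to decompose the map $t\mapsto P(t\phi)$ along the two classes of ergodic measures distinguished by the gap hypothesis and to exploit this gap to force a ``corner'' in the pressure. To streamline notation I would write $\tilde P(t)\eqdef P(t\phi)$.

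Setting $\alpha_-\eqdef\inf_{R\in\Lambda_{02}}\phi(R)$ and $\alpha^\ast\eqdef\sup_{\mu\in\cM_{\rm e}(\Lambda)\setminus\cM_{\rm e}(\Lambda_{02})}\chi_\phi(\mu)$, the hypothesis reads $\alpha_->\alpha^\ast$. Because $\Lambda_{02}\subset\Lambda$ is closed and $F$-invariant, every ergodic $F$-invariant probability measure on $\Lambda$ either is supported in $\Lambda_{02}$ or assigns $\Lambda_{02}$ measure zero. Combining the ergodic decomposition with the variational principle~\eqref{varprinc}, I would split
\[
\tilde P(t)=\max\{\tilde P_{02}(t),\tilde P_\star(t)\},
\]
where $\tilde P_{02}(t)\eqdef P_{F|\Lambda_{02}}(t\phi)$ and $\tilde P_\star(t)\eqdef \sup_{\mu\in\cM_{\rm e}(\Lambda)\setminus\cM_{\rm e}(\Lambda_{02})} \big(h_\mu(F)+t\chi_\phi(\mu)\big)$. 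Both functions are suprema of affine functions in $t$, hence convex (and continuous) on $\RR$.

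Next I would read off slope bounds. For every $\mu\in\cM(\Lambda_{02})$ one has $\chi_\phi(\mu)\ge\alpha_-$, so every affine function entering the sup for $\tilde P_{02}$ has slope $\ge\alpha_-$, and an elementary sup-of-affine-functions argument yields
\[
\tilde P_{02}(t)-\tilde P_{02}(s)\ge(t-s)\alpha_-\quad\text{for all }s<t.
\]
Symmetrically every affine function defining $\tilde P_\star$ has slope $\le\alpha^\ast$, so
\[
\tilde P_\star(t)-\tilde P_\star(s)\le(t-s)\alpha^\ast\quad\text{for all }s<t.
\]
Subtracting, $\tilde P_{02}-\tilde P_\star$ is strictly increasing with slope at least $\alpha_- - \alpha^\ast>0$; hence $\tilde P_{02}-\tilde P_\star\to+\infty$ as $t\to+\infty$ and $\to-\infty$ as $t\to-\infty$, so there is a unique $t_c\in\RR$ with $\tilde P_{02}(t_c)=\tilde P_\star(t_c)$. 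Consequently $\tilde P=\tilde P_\star$ on $(-\infty,t_c]$ and $\tilde P=\tilde P_{02}$ on $[t_c,\infty)$.

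To conclude, I would compute the one-sided derivatives at $t_c$. Since $\tilde P$ coincides with $\tilde P_\star$ immediately to the left of $t_c$ and with $\tilde P_{02}$ immediately to the right (both taking the common value at $t_c$),
\[
D^-\tilde P(t_c)=D^-\tilde P_\star(t_c)\le\alpha^\ast<\alpha_-\le D^+\tilde P_{02}(t_c)=D^+\tilde P(t_c),
\]
where the outer inequalities follow at once by letting $s\to t_c^-$ and $t\to t_c^+$ in the secant inequalities above. This forces non-differentiability at $t_c$, while continuity of $\tilde P$ on all of $\RR$ is immediate from convexity. The main technical point I anticipate is precisely the passage from slope bounds on the \emph{affine} functions composing $\tilde P_{02}$ and $\tilde P_\star$ to bounds on all of their \emph{subgradients} (equivalently, on the one-sided derivatives); this is the standard fact that a sup of affine functions with slopes in an interval $I$ has subdifferentials lying in $I$ at every point, which I would establish via the secant inequalities displayed above, since the sup need not be attained by any particular $\mu\in\cM_{\rm e}(\Lambda)\setminus\cM_{\rm e}(\Lambda_{02})$.
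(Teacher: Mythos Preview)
Your argument is correct and is essentially a fleshed-out version of the paper's one-sentence justification (``an immediate consequence of the fact that every equilibrium state is a subgradient of the pressure function and of the gap in the range of averages\ldots''). The only stylistic difference is that the paper phrases the slope control in terms of equilibrium states giving subgradients (as in Remark~\ref{r.convexx}), whereas you work directly with the secant inequalities for a supremum of affine functions; your route is slightly more robust since, as you note, $\tilde P_\star$ need not have an equilibrium state in $\cM_{\rm e}(\Lambda)\setminus\cM_{\rm e}(\Lambda_{02})$.
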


\section{Transitivity of $\Lambda$}\label{s:7}

In this section we prove Proposition~\ref{p.transitive} claiming that $\Lambda$ is a homoclinic class and hence is topologically transitive.

We start by describing the invariant manifolds of the saddles $P_i$ and $Q_j$ of the diffeomorphism $F$, see~\eqref{e.defPQ}. Recall that by hypothesis the planar horseshoe map $\Phi$ is affine, thus  we have
\[
    [0,1]\times \{\theta_i^u\}=W^\st_\loc (\theta_i,\Phi)\quad
    \text{ and }\quad
    \{\theta_i^s\}\times [0,1]=W^\ut_\loc (\theta_i,\Phi).
\]
The definition of $F$ in~\eqref{e.defF}  implies that
\begin{equation}\label{e.manifolds}
\begin{split}
    [0,1]\times \{\theta_i^u\} \times (0,1]\subset W^\st (P_i,F)
    ,\\
    \{\theta_i^s\} \times [0,1] \times \{p_i\} \subset W^\ut  (P_i,F)
    ,\\
    [0,1]\times \{\theta_i^u\} \times \{0\} \subset W^\st (Q_i,F)
    ,\\
    \{\theta_i^s\}\times [0,1] \times  [0,p_i) \subset W^\ut (Q_i,F).
\end{split}
\end{equation}
In what follows we write
\[
    W^\st_\loc (Q_i,F)= [0,1] \times \{(\theta_i^s,q_i)\}
    \quad\mbox{and} \quad
    W^\ut_\loc (P_i,F)= \{\theta_i^s\} \times [0,1]\times \{p_i\} .
\]	

\begin{rema}\label{r.heterodimensional}{\rm
The definition of $F$ and~\eqref{e.manifolds} immediately imply that the point $P_0$ and the saddles $Q_0$, $Q_2$ are involved in a heterodimensional cycle, that is, the stable manifold of $P_0$ meets the unstable one of $Q_i$ and the unstable manifold of $P_0$ meets the stable one of $Q_i$.
}\end{rema}

To prove Proposition~\ref{p.transitive}, we follow closely the arguments  in~\cite{DiaGel:}, so we only sketch the main ideas and explain the differences.

\begin{proof}[Proof of Proposition~\ref{p.transitive}]
It follows from the construction that the set $\Lambda$ splits into the following three sets:
\begin{itemize}
	\item[1)] the lateral horseshoe $\Lambda_{02}$,
	\item[2)]
the \emph{cycle set} $\displaystyle W^u(P_0,F)\cap W^s(\Lambda_{02},F)$ corresponding to ``heterodimensional cycles'', and
	\item[3)]
the \emph{inner points} $X=(x^s,x^u,x)\in\Lambda$ for which $F^i(X)=(x_i^s,x_i^u,x_i)$ satisfies $x_i\in(0,1)$ for infinitely $i\le0$.
\end{itemize}

The main  ingredient of the proof is the notion of expanding itineraries.
In our case, the existence of such  itineraries is guaranteed by condition (F01).

\begin{rema}[Expanding itineraries]
\label{r.expanding}
{\rm
There are $\kappa>1$ and small $b>0$ close to $0$ such that for any interval $J\subset [f_0^{-2}(b),b]$ there is a number $n(J)$ (uniformly bounded)
such that for every $x\in J$
\[
	\big(f_1\circ f_0^{n(J)}\big)(x)\in  (0,b]
	\quad\text{ and }\quad
	\lvert (f_1\circ f_0^{n(J)})'(x)\rvert \ge \kappa
\]

Let us see how this property follows from our assumptions. For simplicity let us assume linearity of $f_0$ close to $0$ and $1$. Note that there are
arbitrarily small $t>0$ and large $n$ such that $f_0^n([\beta_0^{-1}t,t])=[1-t,1-t\,\lambda_0 ]$, recalling the definitions of $\beta_0$ and $\lambda_0$ in (F0).
Take $b=t$.
Using monotonicity of $f_0'$ and the fact that $J\subset [\beta_0^{-2}\, t,t]$
we have that, for all $x\in J$,
$$
	(f_0^n)'(x) \ge \frac{\lambda_0^2 \, (1-\lambda_0)}{1-\beta_0^{-1}}\,.
$$
We let $n(J)=n$ if $f_0^n(J) \subset [1-t,1]$ and $n(J)=n+1$ otherwise.
This choice implies that for all $x\in J$ we have
$$
	(f_0^{n(J)})'(x) \ge \frac{\lambda_0^3 \, (1-\lambda_0)}{1-\beta_0^{-1}}\,.
$$
Thus, by (F01),
$$
	\lvert (f_1 \circ f_0^{n(J)})'(x)\rvert \ge
\gamma
\, \left(
	\frac{\lambda_0^3 \, (1-\lambda_0)}{1-\beta_0^{-1}} \right) >\kappa>1.
$$
Finally, by construction $f_0^{n(J)} (J) \subset [1-t,1-\lambda_0^2\,t]$ and
thus  $f_1 \circ f_0^{n(J)} (J) \subset (0, \gamma\,t] \subset (0,t]$.
}\end{rema}

Let us now fix $b\in(0,1)$ close to $0$ satisfying Remark~\ref{r.expanding}.
The following claim corresponds to~\cite[Lemma 3.8]{DiaGel:}.

\begin{step}
\label{s.expanding}
	Given any (non-trivial) closed interval $J\subset [f_0^{-2}(b),b]$ there is a finite sequence $\xi(J)=(\xi_0\dots \xi_m)$, $\xi_i\in\{0,1\}$, such that
\begin{itemize}
	\item
		the map $f_{[\xi(J)]}=f_{\xi_m}\circ\ldots\circ f_{\xi_0}$ has a unique expanding fixed point $q^\ast_J\in J$,
	\item
		the unstable manifold $W^u(q_J^\ast,f_{[\xi(J)]})$ contains $[f_0^{-2}(b),b]$.
\end{itemize}		
The sequence $\xi(J)$ is called the \emph{expanding sequence} of $J$.
\end{step}

Indeed, one has that $\xi(J)=(0^{n(J)}\,1\,0^{m(J)})$, where $n(J)$ is defined as in Remark~\ref{r.expanding} and $m(J)$ is the first positive number such that $(f_0^{m(J)} \circ f_1 \circ f_0^{n(J)})(J) \cap [f_0^{-1}(b),b)]$ is non-empty. Remark~\ref{r.expanding} implies that $m(J)\ge 0$ and the map $f_0^{m(J)} \circ f_1 \circ f_0^{n(J)}$ restricted to $J$ is uniformly expanding. Step~\ref{s.expanding} follows by concatenating several returns of $J$ as above. At some step the return of $J$ will cover $J$ in an expanding way.

Consider the periodic point $Q^\ast$ of $F$ associated to the periodic sequence $(\xi_0\dots\xi_m)^\ZZ$ and to the central coordinate
$q_J^\ast$. The second item in Step~\ref{s.expanding} implies the following
(for details see  \cite[Lemma~4.8]{DiaGel:} and~\cite[Remark 4.6]{DiaGel:}).

\begin{step} \label{s.qunstable}
	The unstable manifold $W^u(Q^\ast,F)$ transversely intersects the s-disk  $[0,1]\times \{ (x^u,x)\}$ for any $x\in (0,1)$.
	The stable manifold $W^s(Q^\ast,F)$ transversely intersects any vertical disk of the form $\{x^s\}\times[0,1]\times J$, where $J\subset(0,1)$ is a fundamental domain of $f_0$.
\end{step}

Note that the proof of the above step involves the dynamics of $F$ in  the cubes $\CC_0$ and $\CC_1$ only. Indeed, in what follows all the arguments only involve
iterates in $\CC_0$ and $\CC_1$ and the obtained points have orbits contained
in $\CC$.

Since by construction $W^s(Q^*,F)$ transversely intersect $W^u(Q_i,F)$,
for $i=0,2$, we have the following.

\begin{step} \label{s.sclosures}
	The stable manifolds $W^{s}(Q_0,F)$ and $W^{s}(Q_2,F)$ are contained in the closure of $W^{s} (Q^\ast,F)$. More precisely, for any segment
$$
	[0,1]\times\{(a^u,0)\} \subset W^{s}(Q_0,F),
$$
there is a sequence of horizontal segments,
$$
	\Delta_n^s = [0,1]\times \{(a_n^u, a_n)\} \subset W^{s} (Q^\ast,F)
$$
such that $a_n^u\to a^u$ and $a_n\to 0^+$.
A corresponding statement holds for $Q_2$.
\end{step}

Step~\ref{s.qunstable} and the cycle configuration  in Remark~\ref{r.heterodimensional} immediately imply the following
fact  for the unstable manifolds (somewhat similar to Step~\ref{s.sclosures}).

\begin{step} \label{s.uclosures}
	The strong unstable manifolds $W^{uu}(Q_0,F)$ and $W^{uu}(Q_2,F)$ are contained in the closure of $W^{u} (Q^*,F)$. More precisely, for any segment
$$
	\{a^s\} \times [0,1]\times \{0\} \subset W^{uu}(Q_0,F), \quad
	 a^s\in[0,1],
$$
there is $c>0$ such that for each $n\ge1$ there are a sequence of numbers $a_{n,k}^s$ with $a_{n,k}^s\to a^s$ as $n\to\infty$ and a sequence of intervals $J_{n,k}$ with $\bigcup_kJ_{n,k}\supset(0,c]$ such that the sequence of vertical rectangles $\Delta_n^u$ satisfy
$$
	\Delta_n^u =
	\bigcup_k \,\{a^s_{n,k}\} \times [0,1]\times J_{n,k}
	\subset	W^{u} (Q^*,F).
$$
A corresponding statement holds for $Q_2$.
\end{step}

As the lateral horseshoe  in 1) is contained in the closure of $W^{uu}(Q_0,F)\pitchfork W^s(Q_0,F)$, Steps~\ref{s.sclosures} and~\ref{s.uclosures} imply that $\Lambda_{02}$ is contained in $H(Q^\ast,F)$. This proves the first part of the proposition.

The fact that the cycle points in $\Lambda$
(points satisfying 2) above)
are contained in $H(Q^\ast,F)$ follows arguing exactly as in the previous case considering $\Lambda_{02}$ observing that
\[
	W^u(P_0,F)\subset \overline{W^u(Q^\ast,F)}\quad\text{ and }\quad
	W^s(\Lambda_{02},F)\subset W^s(Q^\ast,F)
\]
using corresponding versions of Steps~\ref{s.sclosures} and~\ref{s.uclosures} where $Q_0$ is replaced by any point in $\Lambda_{02}$.

It remains to study the inner points of $\Lambda$ in 3). Similar to~\cite[Proposition 4.11]{DiaGel:} we have the following.

\begin{step}\label{s.whatever}
	Let $X=(x^s,x^u,x) \in \Lambda$ be an inner point.  Given any small $\delta>0$ the stable segment centered at $X$,
$$
	\Delta^s_\delta(X)\eqdef[x^s-\delta, x^s+\delta]\times\{(x^u,x)\}
$$
transversely intersects $W^u(Q^\ast,F)$. Given a point
$$
	X(\delta)\eqdef
	(x^s(\delta), x^u,x)\in \Delta_\delta^s(X)\pitchfork W^u(Q^\ast,F)
$$
then for every small $\varepsilon>0$ the disk
$$
	\Delta^{cu}_\varepsilon(X(\delta))
 	\eqdef
	\{x^s(\delta)\}\times [x^u-\varepsilon, x^u+\varepsilon]\times
		[x-\varepsilon,x+\varepsilon]
	\subset W^u(Q^\ast,F).
$$
intersects $W^s(Q^\ast,F)$ transversely.
\end{step}

The above fact immediately implies that the rectangle
$\Delta^{cu}_\varepsilon(X(\delta))$ contains a transverse homoclinic point of $Q^\ast$. As $\delta$ and $\varepsilon$ can be chosen arbitrarily small, this
transverse homoclinic point can be taken arbitrarily close to $X$. Thus $X$ is in the class of $Q^\ast$. This proves the proposition.

\begin{proof}[Sketch of the proof of Step~\ref{s.whatever}]
The proof only involves the dynamics in the cubes $\CC_0$ and $\CC_1$. By expansion of $F^{-1}$ in the $x^s$-coordinate, since $X$ is an inner point, after finitely many iterations by $F^{-1}$ the set $F^{-i}(\Delta^s_\delta(X))$ contains a disk of the form $[0,1]\times \{(y^u,y)\}$, for some $y^u\in [0,1]$ and $y\in(0,1)$.
By Step~\ref{s.qunstable} such a disk intersects $W^\ast(Q,F)$, proving our  first assertion and providing $X(\delta)$.

The second assertion follows using the expanding itineraries in Step~\ref{s.expanding}. After positive  iterations of the disk $\Delta^{cu}_\varepsilon(X(\delta))$ by $F$ one gets a ``big disk'' of the form $[0,1]\times \{y^s\} \times(a,a')$, where $a,a'\in (0,1)$ are both close to $1$.
Using the heterodimensional cycle involving $P_0$ and $Q_0$, after further forward
iterates one gets a disk $[0,1]\times \{z^s\} \times (c,c')$, where $c,c' \in (f_0^{-2} (b),b)$. Thus we can consider the interval $J=(c,c')$ and apply Step~\ref{s.expanding}. In this way, a further forward iterate of $\Delta^{cu}_\varepsilon(X(\delta))$ contains a vertical disk of the form $[0,1]\times \{w^s\} \times J'$, where $J'$ contains a fundamental domain of $f_0$ in $[f_0^{-2}(b),b]$.
By
the second part of Step~\ref{s.qunstable} this vertical disk intersects $W^s(Q^\ast,F)$. This finishes the sketch of the proof.
\end{proof}

This completes the sketch of proof of the proposition
\end{proof}

\bibliographystyle{amsplain}

\end{document}